\documentclass[USenglish]{article}	

\usepackage[utf8]{inputenc}				
\usepackage[big,online]{dgruyter}	
\usepackage{lmodern} 
\usepackage{microtype}
\usepackage[numbers,square,sort&compress]{natbib}
\usepackage{times,amssymb,amsmath,amscd,mathrsfs}
\usepackage{amsthm,bm}
\usepackage{xcolor}
\theoremstyle{dgthm}
\newtheorem{theorem}{Theorem}
\newtheorem{corollary}{Corollary}

\theoremstyle{dgdef}

\def\u{{\bm u}}
\def\vt{{\bm v}}
\def\f{{\bm f}}
\newcommand{\Eq}[1]{(\ref{eq:#1})}

\newcommand{\vertiii}[1]{{\left\vert\kern-0.25ex\left\vert\kern-0.25ex\left\vert #1 
    \right\vert\kern-0.25ex\right\vert\kern-0.25ex\right\vert}}

\begin{document}

	\articletype{Research Article}
	\received{Month	DD, YYYY}
	\revised{Month	DD, YYYY}
  \accepted{Month	DD, YYYY}
  \journalname{De~Gruyter~Journal}
  \journalyear{YYYY}
  \journalvolume{XX}
  \journalissue{X}
  \startpage{1}
  \aop
  \DOI{10.1515/sample-YYYY-XXXX}

\title{Finite element formulations for Maxwell's eigenvalue problem using continuous Lagrangian
interpolations}
\runningtitle{FEM for Maxwell's EVP using continuous Lagrangian interpolations}

\author[1]{Daniele Boffi}
\author[2]{Ramon Codina}
\author*[3]{\"{O}nder T\"{u}rk} 
\runningauthor{D.~Boffi et al.}
\affil[1]{\protect\raggedright 
King Abdullah University of Science and Technology, Thuwal 23955-6900, Saudi Arabia, and
University of Pavia, Pavia 27100, Italy}
\affil[2]{\protect\raggedright 
Universitat Polit\`ecnica de Catalunya, and
International Centre for Numerical Methods in Engineering (CIMNE), 08034 Barcelona, Spain}
\affil[3]{\protect\raggedright 
Institute of Applied Mathematics, Middle East Technical University, 06800 Ankara, Turkey, e-mail:onder.turk@yandex.com }	
	
\abstract{We consider nodal-based Lagrangian interpolations for the finite element approximation of the Maxwell eigenvalue problem. The first approach introduced is a standard Galerkin method on Powell-Sabin meshes, which has recently been shown to yield convergent approximations in two dimensions, whereas the other two are stabilized formulations that can be motivated by a variational multiscale approach. For the latter, a mixed formulation equivalent to the original problem is used, in which the operator has a saddle point structure. The Lagrange multiplier introduced to enforce the divergence constraint vanishes in an appropriate functional setting. The first stabilized method we consider consists of an augmented formulation with the introduction of a mesh dependent term that can be regarded as the Laplacian of the multiplier of the divergence constraint. The second formulation is based on orthogonal projections, which can be recast as a residual based stabilization technique. We rely on the classical spectral theory to analyze the approximating methods for the eigenproblem. The stability and convergence aspects are inherited from the associated source problems. We investigate the numerical performance of the proposed formulations and provide some convergence results validating the theoretical ones for several benchmark tests, including ones with smooth and singular solutions.
}

\keywords{Maxwell eigenvalue problem, Stabilized finite elements, Lagrange elements}

\maketitle
	
\section{Introduction}

The main object of this paper is to approximate the Maxwell eigenvalue problem (EVP) which, for instance, can be considered as the problem of determining resonances in a perfectly conducting cavity and the associated nontrivial time harmonic electric field (see, e.g., \cite{caorsi2000,perugias1,barrenechea2014}). Defined on a bounded polyhedral domain $\Omega$ in $\mathbb{R}^d$, $d=2,3$, the EVP we consider consists of finding $[\u, \lambda]$, where $\lambda \in \mathbb{R}$, such that 
\begin{align}
\label{eq:max_evp1}
\begin{cases}
  \mu  \nabla  \times   \nabla \times \u    = \lambda \u \quad & \hbox{in}~\Omega, \\
  \nabla \cdot \u =0 \quad & \hbox{in}~\Omega, \\
	{\bm n}  \times \u= \bm {0} \quad & \hbox{on}~\partial\Omega, 
\end{cases}
\end{align}
where $\mu$ is a positive parameter taken in accordance with the physical assumptions on the problem setting. 

This EVP is of fundamental importance in computational electromagnetism. An active intense research is ongoing in the development of finite element (FE) methods that are capable of correctly approximating the constitutive and topological relations by this and other Maxwell systems. It is well known that the use of the curl conforming N\'ed\'elec or edge elements (rotated Raviart-Thomas elements in two dimensions) providing continuity of the tangential vector components while leaving the normal field components discontinuous across interfaces, results in convergent (spurious-free) approximations.  The N\'ed\'elec elements provide a natural basis for the finite element methods that satisfy the discrete inf-sup condition \cite{monk2001, boffi2010} and, furthermore, Dirichlet conditions on the tangent component of the vector unknown are easy to impose, which is not as clear using nodal elements; this issue is partially touched in this paper. However, as the continuity of the tangential field is inherent along the boundaries, a normal continuity may also be concerned and, moreover, there are situations where the edge elements do not provide optimal implementation and approximation properties. Thus, there are evident reasons to require the use of Lagrange finite elements with low order interpolations and less constraints on the problem domain discretization. In search for this, an equivalent problem to \Eq{max_evp1} can be obtained by reformulating it as a saddle point problem by the enforcement of the divergence constraint using a Lagrange multiplier $p$. In this case, the given system is governed by the Euler-Lagrange equations given as:  find $[\u, p, \lambda]$, where $\lambda \in \mathbb{R}$, such that 
\begin{align}
\label{eq:max_evp2}
\begin{cases}
  \mu  \nabla  \times   \nabla \times \u  + \nabla p = \lambda \u \quad & \hbox{in}~\Omega, \\
  \nabla \cdot \u =0 \quad & \hbox{in}~\Omega, \\
	{\bm n}  \times \u= \bm {0} ~\hbox{and}~ p=0\quad & \hbox{on}~\partial\Omega. 
\end{cases}
\end{align}
The mixed variational form of this problem is also known as the Kikuchi formulation  \cite{boffi2010,kikuchi1987}. 

On the other hand, it is well known that the use of standard nodal continuous Galerkin finite element schemes  to approximate the standard Maxwell systems has the pathology of producing spurious or non physical solutions even on smooth domains. There are many strategies consisting  of penalization or  regularizing the operator by adding a term containing the divergence. We refer for instance to \cite{badia-codina-2010-1,badia-codina-2009-2,boffi2010,boffi-guzman-neilan-2022,christiansen2018}, and the inclusive list of references therein on the subject. Also, mixed methods consisting of inf-sup stable elements using nodal continuous Lagrange elements of any order for the vector field together with a piecewise constant approximation for the multiplier have been proposed in recent works \cite{duan2019, du2020}.

In this paper, our main interest is to approximate the eigenvalues and eigenfunctions of the Maxwell operator without spurious solutions, using continuous Lagrange finite elements. Firstly we consider the standard Galerkin approximation on Powell-Sabin triangulations, where the convergence of the eigenvalues is provided in a recent work \cite{boffi-guzman-neilan-2022} (see also the three dimensional generalization in~\cite{boffi-gong-guzman-neilan-2023}). Next, the mixed finite element formulations based on two stabilized forms are presented; a so called augmented formulation, and a formulation that is based on projections. The first stabilized method provides pressure stability by inclusion of a least squares form of the  divergence constraint introduced for the corresponding source problem in \cite{badia-codina-2009-2}. The second approach is based on stabilizing the divergence and gradient components that are orthogonal to the associated finite element spaces, which is analyzed in \cite{badia-codina-2011-1}. A reinterpretation of these two methods in a unified framework with a brief analysis of their key properties has also been given in \cite{badia-codina-2010-1}.  

The outline of the paper is as follows. In Section~2 we briefly describe the standard Galerkin method on Powell-Sabin meshes, whereas the two stabilized formulations are described in Section~3. While the EVP is directly analyzed for the Galerkin method, convergence results for the stabilized formulations rely on the approximation of the source problem and the classical spectral theory, which is applied in Section~4. The main objective of this work is to check and compare the performance of nodal based formulations for the problem at hand, and this is done in Section~5. Finally, some conclusions are drawn in Section~6. 

\section{The standard Galerkin approximation with Powell-Sabin meshes}
\label{sec:galerkinps}

The variational formulation of \Eq{max_evp1} is given as follows: find $\u \in H_0({\textbf{curl}},\Omega)$, $\u \neq {\bm 0}$, and $\lambda \in \mathbb{R}$ satisfying
\begin{align}
\label{eq:stvarpr}
\mu  (\nabla \times \u, \nabla \times \vt)  = \lambda (\u,\vt) , \quad \forall   \vt  \in  H_0({\textbf{curl}},\Omega),
\end{align}
 where   $H({\textbf{curl}},\Omega)= \{ \vt \in L^2(\Omega)^d : \nabla \times \vt \in   L^2(\Omega)^d \}$, $H_0({\textbf{curl}},\Omega)= \{ \vt \in H({\textbf{curl}},\Omega) : {\bm n}  \times \u= \bm {0} ~\text{on} ~\partial \Omega\}$, and $(\cdot, \cdot)$ denotes the $L^2$-inner product defined over $\Omega$. Note that there is no need to enforce the divergence free condition for $\u$, as taking the divergence on both sides of the first equation in \Eq{max_evp1} directly yields that this field is divergence free.

For the FE approximation to this problem, let $\mathcal{T}_h$ be a partition of the problem domain $\Omega$ into a set of element domains $\{K\}$. As usual, $h$ denotes the characteristic mesh size of the partition, here taken as $h= \max_{K \in \mathcal{T}_h} h_K$, where $h_K$ is the diameter of the element $K$. Since our main interest is in the nodal approximations, we define the space of piecewise continuous  polynomials on $\Omega$ as 
$$\mathcal{N}_k(\Omega)  =\{ v_h \in \mathcal{C}^0 (\bar{\Omega}) : v_h \vert_K \in  \mathcal{P}_k(K), \quad \forall K \in  \mathcal{T}_h\},$$ 
where $\mathcal{P}_k(K)$ denotes the space of polynomials of degree at most $k$ defined on $K$. For the components of the vector fields as well as the scalar fields we will make use of these $H^1(\Omega)$-conforming approximating spaces in which every function can be determined uniquely by its values on the set of nodes of the defining elements. For all the analysis given in this work we assume for the sake of simplicity that the partitions are quasi-uniform.  

The Galerkin discretization on a finite dimensional space ${\cal V}_h$ of partition size $h$, can be written as: find nonzero $\u_h \in {\cal V}_h$,   and $\lambda_h \in \mathbb{R}$ such that 
\begin{align}
\label{eq:galerkin}
\mu  (\nabla \times \u_h, \nabla \times \vt_h)  = \lambda (\u_h,\vt_h) , \quad \forall   \vt_h  \in  {\cal V}_h .
\end{align}
As we have already mentioned, it is known that when ${\cal V}_h$ is taken as the $H^1(\Omega)$-conforming Lagrange finite element space spurious solutions may exist for an arbitrary mesh size (see \cite{boffi2010,boffi-guzman-neilan-2022}, and the references therein). An example to such pathology will be given in Section \ref{sec:numres}. On the other hand, it has been proved in \cite{boffi-guzman-neilan-2022} that the standard formulation when implemented by the use of linear Lagrange finite elements on Powell-Sabin triangulations yields convergence of the eigenvalues to the true ones; the analysis presented in this reference is not based on the approximation properties of the method for the source problem, since it is not well posed. We will also include a numerical evidence for this in Section \ref{sec:numres}.

\section{Stabilized formulations}
\label{sec:stab}

The two stabilized formulations we shall consider are based on the Kikuchi formulation \Eq{max_evp2} of the EVP.  There are essentially two alternatives for the variational form of this formulation, depending on whether the term with $p$ is integrated by parts or not, as this inherently implies two possible choices for the functional framework of the problem~\cite{badia-codina-2009-2, badia-codina-2010-1}. Consistently with the Galerkin approximation described earlier, we consider only the so called curl formulation, in which the pressure gradient is not integrated by parts and the space where the solution is sought is ${\cal X} =  H_0({\textbf{curl}},\Omega)   \times H^1_0(\Omega)$. The problem then reads: find  $[\u,p]  \in {\cal X}$ and $\lambda \in \mathbb{R}$ such that 
\begin{align}
\label{eq:varevp}
B_{ }([\u,p],[\vt,q]) = \lambda (\u,\vt) , \quad \forall  [\vt,q] \in  {\cal X},
\end{align}
where 
\begin{align*}
B_{ }([\u,p],[\vt,q]) = \mu  (\nabla \times \u, \nabla \times \vt) + (\nabla p, \vt )  - (\nabla q, \u ) .  
\end{align*}

Let ${\cal V}_h\subset {\cal V} := H_0({\textbf{curl}},\Omega) $ and ${\cal Q}_h \subset {\cal Q} := H^1_0(\Omega)$ be the FE spaces to approximate $\u$ and $p$, respectively. The Galerkin approximation of the variational problem in ${\cal X}_h = {\cal V}_h\times {\cal Q}_h \subset {\cal X}$ is given as: find  $[\u_h,p_h]  \in {\cal X}_h$ and $\lambda_h \in \mathbb{R}$ such that   
\begin{align}
\label{eq:varevpG}
B_{ }([\u_h,p_h],[\vt_h,q_h]) = \lambda_h (\u_h,\vt_h) , \quad \forall  [\vt_h,q_h] \in  {\cal X}_h.
\end{align}
The corresponding source problem is well posed in spaces ${\cal V}_{h}$ and ${\cal Q}_{h}$ if they satisfy the discrete inf-sup condition
\begin{align}
\inf_{p_h\in {\cal Q}_{h}}\sup_{\vt_h\in {\cal V}_{h}} \frac{(\nabla p_h,\vt_h)} {\Vert p_h \Vert_{\cal Q} \Vert \vt_h \Vert_{\cal V}} \geq K_b,\label{eq:littleinfsup-h}
\end{align}
($\Vert\cdot\Vert_{\cal B}$ standing for the norm in a space $\cal B$)
for a certain constant $K_b > 0$, and this is a sufficient condition for the EVP to be well-posed. Examples of pairs of spaces satisfying this condition are those based on N\'ed\'elec's elements to construct ${\cal V}_{h}$ and nodal Lagrangian continuous elements to construct ${\cal Q}_{h}$. However, as it has been said, we are interested in nodal-based interpolations, case in which ${\cal V}_h$ and ${\cal Q}_h$ fail to fulfill condition~\Eq{littleinfsup-h}. As explained in the previous section, there is also the possibility to use the Galerkin method with nodal elements without introducing the Lagrange multiplier $p$ if the FE mesh is of Powell-Sabin type.

The alternative is to switch to a stabilized FE approximation to approximate the eigenproblem given in \Eq{varevp}. We consider in this paper two options, and both can be written in a unified manner as follows: find  $[\u_h,p_h]  \in {\cal X}_h$ and $\lambda_h \in \mathbb{R}$ such that 
\begin{align}
\label{eq:stabform}
B_{\text{S}}([\u_h,p_h],[\vt_h,q_h]) = \lambda_h (\u_h,\vt_h) , \quad \forall  [\vt_h,q_h] \in  {\cal X}_h,
\end{align}
where $B_{\text{S}}([\u_h,p_h],[\vt_h,q_h])$ is defined as
\begin{align*}
B_{\text{S}}([\u_h,p_h],[\vt_h,q_h]) &= B([\u_h,p_h],[\vt_h,q_h])  \\
 &+  \sum_{K} \tau_{p} (\tilde{P}(\nabla p_h), \tilde{P}(\nabla q_h))_K \\ &+ \sum_{K} \tau_{\u}(\tilde{P}(\nabla \cdot \u_h), \tilde{P}(\nabla \cdot \vt_h))_K. 
\end{align*}
In the above equation  $\sum_{K}$ signifies the summation over all elements $K$ of the partition, and $(\cdot,\cdot)_K$ denotes the $L^2(K)$-inner product. The stabilization parameters are defined as follows 
$$\tau_{p}=c_p\frac{\ell^2}{\mu}, \quad \tau_{\u}= c_{\u} \mu \frac{h^2}{\ell^2},$$
where  $c_p$ and $c_{\u}$ are appropriately chosen algorithmic constants and $\ell$ is a length scale of the problem. It is observed that these parameters are constant in the case of quasi-uniform FE partitions (i.e., using the same $h$ for all elements) and constant $\mu$. Otherwise, they should be computed element-wise; this is why we have introduced the summation over the elements, which is in fact not needed in our case. 

The method depends on the projection $\tilde{P}$ which can be either the identity $I$ or the orthogonal projection to the finite element space, $P_h^\bot$, leading to the augmented (AG) and the orthogonal subgrid scale (OSGS) formulations, respectively. This orthogonal projection can be computed as $P_h^\bot = I - P_h$, $P_h$ being the $L^2(\Omega)$-projection onto the FE space, either ${\cal V}_h$ or ${\cal Q}_h$; we have not distinguished these two possibilities, being clear by the context the projection to consider. The bilinear form $B_{\text{S}}$ in \Eq{stabform} will be denoted by $B_{\text{AG}}$ when $\tilde{P}=I$, and by $B_{\text{OSGS}}$ when $\tilde{P}=P_h^\bot$.

In the case of the AG formulation, the first term introduced (that involving $\nabla p_h$) consists of adding a penalization term of the form $-\frac{\ell}{\mu} \Delta p$ in the divergence-free condition for $\u$~\cite{badia-codina-2009-2}. Obviously, this penalty is exact for the continuous problem, since the homogeneous boundary condition for $p$ implies that $p=0$ almost everywhere in $\Omega$. Indeed, in the problem
\begin{align*}
  \mu  \nabla  \times   \nabla \times \u +\nabla p   = \lambda \u \quad & \hbox{in}~\Omega, \\
 -\frac{\ell}{\mu} \Delta p+  \nabla \cdot \u =0 \quad & \hbox{in}~\Omega, \\
	{\bm n}  \times \u= \bm {0} \quad & \hbox{on}~\partial\Omega, 
\end{align*}
if we take the divergence of the first equation and use the second we get that $\Delta p = \lambda \nabla\cdot \u = \lambda \frac{\ell}{\mu} \Delta p$, so that $\Delta p = 0$ as soon as we choose $\ell$ such that $\lambda \frac{\ell}{\mu} \not = 1$, i.e., $\frac{\mu} {\ell}$ is not an eigenvalue of Maxwell's EVP.

On the other hand, the OSGS formulation is a residual-based stabilized discretization where only the components of the divergence and gradient terms in \Eq{stabform} that are orthogonal to the corresponding finite element spaces are stabilized \cite{badia-codina-2011-1, badia-codina-2010-1}. The orthogonal projections onto the corresponding space can be computed iteratively or treated implicitly. We will follow the latter option in the numerical results presented for this study. 

In order to prove that the solutions of the stabilized formulations converge to the solutions of the continuous problem, we apply the classical spectral approximation theory relying on the convergence of the associated source problem for each formulation. For this reason, let us consider the continuous source problem associated to the Maxwell EVP we deal with in this work, which reads as follows: given a solenoidal vector $\f \in  L^2(\Omega)^d$, find  $[\u,p]  \in {\cal X}$ such that 
\begin{align}
\label{eq:source}
B_{ }([\u,p],[\vt,q]) =  (\f,\vt) , \quad \forall  [\vt,q] \in  {\cal X}.
\end{align}
 
As for the EVP, the stabilized formulations for the source problem can be jointly written as:  find  $[\u_h,p_h]  \in {\cal X}_h$ such that 
\begin{align}
\label{eq:stabsource}
B_{\text{S}}([\u_h,p_h],[\vt_h,q_h]) =  (\f,\vt_h) , \quad \forall  [\vt_h,q_h] \in  {\cal X}_h,
\end{align}
where $B_{\text{S}}$ denotes either $B_{\text{AG}}$, the AG form,  or $B_{\text{OSGS}}$, the OSGS form, as defined previously. 

The stability and convergence of the AG and OSGS formulations for the source problems are analyzed in \cite{badia-codina-2009-2} and \cite{badia-codina-2011-1}, respectively. We briefly list the results we need for the EVP in the sequel and refer to the associated papers for detailed discussions. Despite both the AG and the OSGS yield good results, the norm in which stability and convergence can be proved is weaker for the latter than for the former.

Let us start with the AG formulation. The norm in which the numerical analysis of the source problem can be done is 
\begin{align}
 \Vert [\vt,q] \Vert^2_{\rm AG} :=  \mu \Vert\nabla\times \vt \Vert^2_{L^2(\Omega)} + \frac{\mu}{\ell^2} \Vert \vt \Vert^2_{L^2(\Omega)}
+ \frac{\ell^2}{\mu} \Vert \nabla p \Vert^2_{L^2(\Omega)}, \label{eq:agnorm}
\end{align}
which is a norm in ${\cal X}$ with adequate scaling coefficients. 

In the sequel, $\lesssim$ denotes an inequality up to a positive constant that is independent of the mesh size and the problem coefficients. 

The following results are proved in \cite{badia-codina-2009-2}:

\begin{theorem}\label{th:th1} Suppose that both ${\cal V}_{h}$ and ${\cal Q}_{h}$ are constructed using continuous nodal based interpolations, each of arbitrary degree. Then, problem \Eq{stabsource} (with $B_{\rm S} = B_{\rm AG}$) is well posed, in the sense that it admits a unique solution $[\u_h,p_h]\in {\cal V}_{h}\times {\cal Q}_{h}$ that satisfies
\begin{align*}
\Vert [\u_h,p_h] \Vert_{\rm AG} \lesssim \Vert {\bm f}\Vert_{V'}.
\end{align*}
Furthermore, $[\u_h,p_h]$ converges optimally as $h \to 0$ to the solution $[\u,p]\in V \times Q$ of the continuous problem \Eq{source}, in the following sense:
\begin{align}
& \Vert [\u - \u_h, p - p_h] \Vert_{\rm AG} \nonumber \\
& \qquad  \lesssim  \inf_{[ \vt_h, q_h] \in {\cal V}_{h}\times {\cal Q}_{h}} \Vert [\u - \vt_h, p - q_h] \Vert_{\rm AG} 
+ \frac{\nu^{1/2}}{\ell}\left( \sum_K h_K \Vert \u - \vt_h\Vert^2_{L^2(\partial K)}\right)^{1/2}.\label{eq:errest2}
\end{align}
\end{theorem}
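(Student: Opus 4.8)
The plan is to reduce the statement to two ingredients, in the way that is standard for stabilized formulations: (a) an $h$- and coefficient-uniform discrete inf-sup bound for $B_{\rm AG}$ with respect to the norm $\Vert\cdot\Vert_{\rm AG}$ of \Eq{agnorm} on ${\cal X}_h={\cal V}_h\times{\cal Q}_h$, and (b) a Strang-type estimate exploiting the exact consistency of the stabilized form. Once (a) is available, well-posedness of \Eq{stabsource} with $B_{\rm S}=B_{\rm AG}$ and the a priori bound $\Vert[\u_h,p_h]\Vert_{\rm AG}\lesssim\Vert\f\Vert_{V'}$ are immediate, since on the finite-dimensional space ${\cal X}_h$ an inf-sup bound is exactly quantitative injectivity, hence bijectivity, and testing the equation against the maximizing direction yields the bound; the quasi-best-approximation estimate \Eq{errest2} --- hence optimal convergence after inserting standard interpolation and trace bounds --- follows from (b).

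For (a) I would first test with $[\vt_h,q_h]=[\u_h,p_h]$. The coupling terms $(\nabla p_h,\u_h)-(\nabla p_h,\u_h)$ cancel, leaving $B_{\rm AG}([\u_h,p_h],[\u_h,p_h])=\mu\Vert\nabla\times\u_h\Vert^2_{L^2(\Omega)}+\tau_p\Vert\nabla p_h\Vert^2_{L^2(\Omega)}+\tau_{\u}\Vert\nabla\cdot\u_h\Vert^2_{L^2(\Omega)}$, which controls the curl and pressure-gradient parts of $\Vert[\u_h,p_h]\Vert_{\rm AG}$ (recall $\tau_p=c_p\ell^2/\mu$) together with the $L^2$ norm of $\nabla\cdot\u_h$. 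What is missing is $\tfrac{\mu}{\ell^2}\Vert\u_h\Vert^2_{L^2(\Omega)}$, and recovering it is the heart of the matter: nodal spaces do not satisfy a discrete Friedrichs inequality --- this is precisely why the plain Galerkin method of Section~\ref{sec:galerkinps} is unstable on general meshes --- so one cannot bound $\Vert\u_h\Vert_{L^2(\Omega)}$ by $\Vert\nabla\times\u_h\Vert_{L^2(\Omega)}$ alone. The way out is that $\u_h$, being a globally continuous piecewise polynomial, lies in $H_0({\textbf{curl}},\Omega)\cap H({\rm div},\Omega)$, so the \emph{continuous} Friedrichs--Weber inequality applies: $\Vert\u_h\Vert_{L^2(\Omega)}\lesssim\Vert\nabla\times\u_h\Vert_{L^2(\Omega)}+\Vert\nabla\cdot\u_h\Vert_{H^{-1}(\Omega)}$ (modulo a finite-dimensional harmonic contribution that vanishes when $\Omega$ is simply connected, and must be handled in the usual way otherwise). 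One then bounds $\Vert\nabla\cdot\u_h\Vert_{H^{-1}(\Omega)}=\sup\{-(\u_h,\nabla s):s\in H^1_0(\Omega),\ \Vert\nabla s\Vert_{L^2(\Omega)}\le 1\}$ by splitting $s=(s-s_h)+s_h$ with $s_h\in{\cal Q}_h$ a Scott--Zhang quasi-interpolant preserving the homogeneous boundary datum: the $s-s_h$ contribution is $\lesssim h\,\Vert\nabla\cdot\u_h\Vert_{L^2(\Omega)}$ element by element, while $-(\u_h,\nabla s_h)$ equals $B_{\rm AG}([\u_h,p_h],[\bm{0},s_h])$ up to the term $\tau_p(\nabla p_h,\nabla s_h)$ and is therefore controlled by the sought inf-sup quantity since $\Vert[\bm{0},s_h]\Vert_{\rm AG}\lesssim\tfrac{\ell}{\mu^{1/2}}\Vert\nabla s\Vert_{L^2(\Omega)}$. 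The crucial point is that the mesh-dependent remainder $h\,\Vert\nabla\cdot\u_h\Vert_{L^2(\Omega)}$ is absorbed precisely because $\tau_{\u}=c_{\u}\mu h^2/\ell^2$ carries the compensating factor $h^2$. Choosing the final test direction as $[\u_h,p_h]$ plus a suitably scaled $[\bm{0},s_h]$-type correction, with coefficients tuned so this absorption is legitimate, then yields the inf-sup bound with constant independent of $h$, $\mu$ and $\ell$; continuity of $B_{\rm AG}$ in the same norm is elementary.

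For (b), the exact solution $[\u,p]$ of \Eq{source} has $p=0$ and $\nabla\cdot\u=0$, so both stabilization terms vanish when tested against ${\cal X}_h$; hence $B_{\rm AG}([\u,p],[\vt_h,q_h])=(\f,\vt_h)$ for all $[\vt_h,q_h]\in{\cal X}_h$ and Galerkin orthogonality $B_{\rm AG}([\u-\u_h,p-p_h],[\vt_h,q_h])=0$ holds. For arbitrary $[\vt_h,q_h]\in{\cal X}_h$ I would write $[\u-\u_h,p-p_h]=[\u-\vt_h,p-q_h]+[\vt_h-\u_h,q_h-p_h]$, bound the fully discrete part by (a) together with Galerkin orthogonality, and thus be left to estimate $B_{\rm AG}([\u-\vt_h,p-q_h],[\bm w_h,r_h])$. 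Every term there except the grad-div stabilization is $\lesssim\Vert[\u-\vt_h,p-q_h]\Vert_{\rm AG}\Vert[\bm w_h,r_h]\Vert_{\rm AG}$ by Cauchy--Schwarz and the definitions of $\tau_p$ and $\Vert\cdot\Vert_{\rm AG}$. The remaining term $\sum_K\tau_{\u}(\nabla\cdot(\u-\vt_h),\nabla\cdot\bm w_h)_K$ cannot be handled directly, since for singular data $\u$ need not belong to $H^1(\Omega)$; integrating by parts element by element gives $-\sum_K\tau_{\u}(\u-\vt_h,\nabla(\nabla\cdot\bm w_h))_K+\sum_K\tau_{\u}\langle(\u-\vt_h)\cdot\bm n,\nabla\cdot\bm w_h\rangle_{\partial K}$, and inverse and inverse-trace inequalities on $\bm w_h$ absorb the negative powers of $h_K$ against the $h_K^2$ in $\tau_{\u}$: the volume part contributes $\lesssim\Vert[\u-\vt_h,p-q_h]\Vert_{\rm AG}\Vert[\bm w_h,r_h]\Vert_{\rm AG}$, and the boundary part contributes exactly the term $\tfrac{\nu^{1/2}}{\ell}(\sum_K h_K\Vert\u-\vt_h\Vert^2_{L^2(\partial K)})^{1/2}\Vert[\bm w_h,r_h]\Vert_{\rm AG}$ of \Eq{errest2}. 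Taking the infimum over $[\vt_h,q_h]\in{\cal X}_h$ concludes the argument.

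The main obstacle is the $L^2$-control of $\u_h$ in step (a): it is the single point where the structure of the stabilized form genuinely matters, and making the continuous Friedrichs inequality cooperate with the grad-div term --- so that the quasi-interpolation remainder is absorbed and the resulting constants are truly uniform in $h$, $\mu$ and $\ell$ --- is the delicate bookkeeping of the proof. Everything else (consistency, the term-by-term bounds, the triangle inequality, and inserting interpolation estimates to read off optimal rates) is routine; the only nontrivial by-product is the element-boundary remainder generated when the grad-div stabilization is integrated by parts, which is what forces the extra term in \Eq{errest2}.
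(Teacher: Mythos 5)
The paper itself does not prove this theorem --- it is imported verbatim from \cite{badia-codina-2009-2} --- but your reconstruction is essentially the argument of that reference: coercivity of $B_{\rm AG}$ in the curl/$\nabla p$/div semi-norm, recovery of $\frac{\mu^{1/2}}{\ell}\Vert\u_h\Vert_{L^2(\Omega)}$ via the continuous Maxwell--Friedrichs inequality with the divergence measured in $H^{-1}(\Omega)$ (so that the $h^2$ carried by $\tau_{\u}$ absorbs the quasi-interpolation remainder $h\Vert\nabla\cdot\u_h\Vert_{L^2(\Omega)}$), and a consistency-based Strang argument in which the element-wise integration by parts of the grad-div stabilization produces exactly the boundary contribution in \Eq{errest2} (whose $\nu^{1/2}$ is indeed $\mu^{1/2}$ in the present notation). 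Your proposal is structurally correct and correctly isolates the genuinely delicate points: the $L^2$-control of $\u_h$, the cohomology/topology caveat, and the origin of the extra element-boundary term.
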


The error estimate \Eq{errest2} is optimal for smooth solutions, that is when $\u$ belongs to $H^r(\Omega)^d$ for $r\ge1$.
In this case we have
\begin{equation}
\label{eq:estimatesmooth}
\Vert [\u-\u_h, p-p_h] \Vert_{\text{AG}} \lesssim  h^{t-1}  \Vert\u \Vert_{H^t(\Omega)},
\end{equation}
where $t= \min\{r,k+1\}$, $k$ being the order of the FE interpolation.

In the case of solutions with Sobolev regularity $0 < r < 1$, it is shown in~\cite{badia-codina-2009-2} that the convergence is also optimal if the FE meshes are able to interpolate optimally scalar functions of Sobolev regularity $r+1$, whose gradients are components of $\u$. This happens for example if the FE meshes are of Powell-Sabin type (see \cite{badia-codina-2009-2} and references therein for further discussion). More precisely, if ${\cal V}_h$ consists of functions whose gradients are in $\mathcal{N}_k(\Omega)^d  \cap  H_0({\textbf{curl}},\Omega)$, then (see \cite{badia-codina-2009-2}, Corollary 3.12): 
$$\Vert[ \u-\u_h, p-p_h] \Vert_{\text{AG}} = O(h^{t-\epsilon}),$$ 
for any $\epsilon \in \left] 0, t-1/2\right[$ for $t= \min\{r,k\}$. This condition is not used in \cite{Bonito2011} (in the context of the EVP), i.e., there is no need to use Powell-Sabin meshes in the formulation proposed there; however, this is at the expense of loosing optimality in the FE order of convergence. In fact, the formulation proposed in this reference with $\alpha = 1$ (a possibility not allowed in \cite{Bonito2011}) corresponds to the AG formulation we are considering. 

For the EVP analysis we will need to relate the $L^2(\Omega)$-norm of the error $\u-\u_h$ to the $L^2(\Omega)$-norm of the forcing term $\bm f$. This follows from Theorem~\ref{th:th1} assuming enough regularity in the domain $\Omega$ (see, e.g., \cite{CoDa,bonito-et-al-2013}).

\begin{corollary}\label{th:cor1}
Assume that the domain $\Omega$ is such that the following elliptic regularity property holds:
\begin{align}
\mu^{1/2} \Vert \nabla \times \u \Vert_{H^s(\Omega)} 
+\frac{\mu^{1/2}}{\ell} \Vert \u \Vert_{H^s(\Omega)} 
\lesssim \frac{\ell}{\mu^{1/2}} \frac{1}{\ell^{s}} \Vert \f\Vert_{L^2(\Omega)},\label{eq:ellreg}
\end{align}
for some $s > 0$. Then, there holds
\begin{align}
\frac{\mu^{1/2}}{\ell} \Vert \u - \u_h \Vert_{L^2(\Omega)}  \lesssim \frac{\ell}{\mu^{1/2}}  \left( \frac{h}{\ell}\right)^s  \Vert \f \Vert_{L^2(\Omega)}. \label{eq:l2l2}
\end{align}
\end{corollary}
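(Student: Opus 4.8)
The plan is to deduce \Eq{l2l2} directly from Theorem~\ref{th:th1}, the point being that the $\rm AG$ norm \emph{already} controls the $L^2(\Omega)$ norm of the velocity error: from the definition \Eq{agnorm} one has $\frac{\mu^{1/2}}{\ell}\Vert\u-\u_h\Vert_{L^2(\Omega)}\le\Vert[\u-\u_h,p-p_h]\Vert_{\rm AG}$, so it suffices to show that the right-hand side of the convergence estimate \Eq{errest2} is of order $\frac{\ell}{\mu^{1/2}}(h/\ell)^s\Vert\f\Vert_{L^2(\Omega)}$ under the hypothesis \Eq{ellreg}. First I would use \Eq{ellreg} to record the regularity of the exact solution: since $\f$ is solenoidal, the Lagrange multiplier of the continuous problem \Eq{source} vanishes ($p=0$, $\nabla\cdot\u=0$), so its first equation reduces to $\mu\,\nabla\times\nabla\times\u=\f$, and \Eq{ellreg} states precisely that $\nabla\times\u\in H^s(\Omega)^d$ and $\u\in H^s(\Omega)^d$ with the indicated $\mu$-- and $\ell$--scaled bound in terms of $\Vert\f\Vert_{L^2(\Omega)}$. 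When the domain is regular enough that this upgrades to $\u\in H^{1+s}(\Omega)^d$ with $\mu^{1/2}\Vert\u\Vert_{H^{1+s}(\Omega)}\lesssim\frac{\ell^{1-s}}{\mu^{1/2}}\Vert\f\Vert_{L^2(\Omega)}$ (a consequence of $\Vert\u\Vert_{H^{1+s}}\lesssim\Vert\nabla\times\u\Vert_{H^s}$ for the div--curl system with $\nabla\cdot\u=0$, ${\bm n}\times\u={\bm 0}$ on a sufficiently smooth domain), one is in the smooth regime of \Eq{estimatesmooth}.

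Second, I would apply the approximation estimate \Eq{errest2}. Taking $q_h=0$ (legitimate since $p=0\in{\cal Q}_h$) and a quasi-interpolant of $\u$ in ${\cal V}_h$, the infimum term in \Eq{errest2} is bounded, after matching the $\mu$-- and $\ell$--weights carried by $\Vert\cdot\Vert_{\rm AG}$, by $\mu^{1/2}h^s\Vert\u\Vert_{H^{1+s}(\Omega)}$; the non-standard boundary contribution $\frac{\mu^{1/2}}{\ell}\big(\sum_K h_K\Vert\u-\vt_h\Vert^2_{L^2(\partial K)}\big)^{1/2}$ is handled by a scaled trace inequality on each element $K$, which shows it does not lower the power of $h$ (it is in fact of order $h^{1+s}$). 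Combining with the regularity bound of the first step gives $\Vert[\u-\u_h,p-p_h]\Vert_{\rm AG}\lesssim\frac{\ell}{\mu^{1/2}}(h/\ell)^s\Vert\f\Vert_{L^2(\Omega)}$, and extracting the $L^2(\Omega)$-part of the $\rm AG$ norm as above yields \Eq{l2l2}. In the genuinely non-smooth case, where \Eq{ellreg} delivers $\u\in H^s(\Omega)^d$ with $\nabla\times\u\in H^s(\Omega)^d$ but not $\u\in H^{1+s}(\Omega)^d$, the same conclusion follows by replacing the classical interpolation estimate with the Powell--Sabin-type interpolation property recalled just after Theorem~\ref{th:th1} (so that ${\cal V}_h$, consisting of gradients of nodal scalars, still approximates $\u$ at essentially the rate $h^s$ in the relevant seminorm), and by invoking \cite{CoDa,bonito-et-al-2013} to guarantee that \Eq{ellreg} actually holds for the polyhedral $\Omega$ under consideration.

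The main obstacle here is bookkeeping rather than conceptual: one must verify that the constants hidden in Theorem~\ref{th:th1} carry exactly the powers of $\mu$ and of the length scale $\ell$ that, multiplied by the weighted regularity bound from \Eq{ellreg}, reproduce the precise scaling $\frac{\ell}{\mu^{1/2}}(h/\ell)^s$ on the right of \Eq{l2l2}, and that neither the extra boundary term in \Eq{errest2} nor the low-regularity (Powell--Sabin) route degrades the power of $h$. An alternative and more robust route, which never assumes $\u\in H^{1+s}$ and uses only the combined estimate \Eq{ellreg} for an auxiliary adjoint problem, is an Aubin--Nitsche duality argument: let $[\bm{\varphi},\xi]\in{\cal X}$ solve the adjoint source problem with datum $\u-\u_h$, write $\Vert\u-\u_h\Vert^2_{L^2(\Omega)}=B([\u-\u_h,p-p_h],[\bm{\varphi},\xi])$, subtract an interpolant $[\bm{\varphi}_h,\xi_h]\in{\cal X}_h$ using the consistency of $B_{\text{S}}$ (its stabilization terms vanish at $[\u,p]$ because $p=0$ and $\nabla\cdot\u=0$), and bound the outcome by the product of the $\rm AG$-norm errors of the primal and adjoint discretizations, the latter being $O((h/\ell)^s)$ by \Eq{ellreg}; this also yields \Eq{l2l2}, at the cost of tracking the skew-symmetric coupling terms and the boundary term of \Eq{errest2} for the adjoint problem.
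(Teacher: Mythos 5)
Your main route is essentially the paper's proof: bound $\frac{\mu^{1/2}}{\ell}\Vert\u-\u_h\Vert_{L^2(\Omega)}$ by $\Vert[\u-\u_h,0]\Vert_{\rm AG}$ (using $p=0$), invoke the error estimate \Eq{errest2} including the elementwise boundary term, reduce it by standard interpolation to $\mu^{1/2}h^s\Vert\nabla\times\u\Vert_{H^s(\Omega)}+\frac{\mu^{1/2}}{\ell}h^s\Vert\u\Vert_{H^s(\Omega)}$, and conclude with \Eq{ellreg}. The only caveat is that your detour through $\u\in H^{1+s}(\Omega)^d$ is unnecessary (and adds a hypothesis the corollary does not make): the paper applies the interpolation estimates directly to $\u$ and $\nabla\times\u$ at the $H^s$ level, which is exactly what \Eq{ellreg} controls, so your ``low-regularity'' branch is in fact the intended argument, while the Aubin--Nitsche alternative is not used.
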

\begin{proof}
The proof follows directly form standard interpolation estimates and the elliptic regularity property \Eq{ellreg}:
\begin{align*}
\frac{\mu^{1/2}}{\ell} \Vert \u - \u_h \Vert_{L^2(\Omega)} & \lesssim \Vert [\u - \u_h, 0 ] \Vert_{\rm AG} \\
&\lesssim \inf_{\vt_h \in {\cal V}_h} \Vert [\u - \vt_h, 0 ] \Vert_{\rm AG} +  \inf_{\vt_h \in {\cal V}_h} \frac{\mu^{1/2}}{\ell} \sum_K h^{1/2} \Vert \u - \vt_h\Vert_{L^2(\partial K)}  \\
& \lesssim 
\mu^{1/2}  h^s \Vert \nabla \times \u \Vert_{H^s(\Omega)} 
+\frac{\mu^{1/2}}{\ell} h^s \Vert \u \Vert_{H^s(\Omega)},
\end{align*}
from where \Eq{l2l2} follows using \Eq{ellreg}.
\end{proof}

Next, for the OSGS formulation we define the mesh dependent norm:
\begin{align}
 \Vert [\vt,q] \Vert^2_{\rm OSGS} :=  \mu \Vert\nabla\times \vt \Vert^2_{L^2(\Omega)} + \frac{\mu}{\ell^2} \Vert \vt \Vert^2_{L^2(\Omega)}
+ \frac{\ell^2}{\mu} \Vert P_h^\bot(\nabla p) \Vert^2_{L^2(\Omega)}
+ \frac{h^2}{\mu} \Vert P_h(\nabla p) \Vert^2_{L^2(\Omega)}. \label{eq:osgsnorm}
\end{align}
This norm is weaker than \Eq{agnorm} because the component of the pressure gradient in ${\cal V}_h$ is multiplied by $h^2$ instead of $\ell^2$. Nevertheless, the numerical results obtained in \cite{badia-codina-2011-1} showed that the OSGS formulation is as stable as the AG one; we shall corroborate this fact in this paper in the context of Maxwell's EVP. Stability and convergence is proved in \cite{badia-codina-2011-1}; because of the proof-technique employed, the statements of these results slightly differ from those in Theorem~\ref{th:th1}, but the essence is the same, namely, stability and optimal convergence:

\begin{theorem}\label{th:th2} Suppose that both ${\cal V}_{h}$ and ${\cal Q}_{h}$ are constructed using continuous nodal based interpolations of arbitrary degree each. Then, problem \Eq{stabsource} (with $B_{\rm S} = B_{\rm OSGS}$) is well posed, in the sense that 
\begin{align*}
\inf_{[\u_h,p_h]\in {\cal V}_h\times {\cal Q}_h} \sup_{[\vt_h,q_h]\in {\cal V}_h\times {\cal Q}_h} \frac{B_{\rm OSGS} ([\u_h,p_h],[\vt_h,q_h])}{\Vert [\u_h,p_h] \Vert_{\rm OSGS}\Vert [\vt_h,q_h] \Vert_{\rm OSGS}} \geq K_{B_{\rm OSGS}} > 0.
\end{align*}
Furthermore, $[\u_h,p_h]$ converges optimally as $h \to 0$ to the solution $[\u,p]\in V \times Q$ of the continuous problem \Eq{source}, in the following sense:
\begin{align}
& \Vert [\u - \u_h, p - p_h] \Vert_{\rm OSGS} 
  \lesssim  \inf_{q_h\in  {\cal Q}_{h}} \Vert [\u - P_{{\cal V}_h}(\u), p - q_h] \Vert_{\rm OSGS} 
+ \inf_{\vt_h\in  {\cal V}_{h}} \mu^{1/2} \Vert \nabla \times\u - \vt_h\Vert,\label{eq:errest3}
\end{align}
where $P_{{\cal V}_h}$ is the $L^2(\Omega)$-projection onto ${\cal V}_h$.
\end{theorem}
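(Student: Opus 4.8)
The plan is to obtain Theorem~\ref{th:th2} the way the associated source problem is treated in \cite{badia-codina-2011-1}, from three ingredients: a discrete inf-sup estimate for $B_{\rm OSGS}$ in the mesh-dependent norm $\Vert\cdot\Vert_{\rm OSGS}$; consistency of the stabilized bilinear form; and a Strang-type argument turning these into \Eq{errest3}.

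For the inf-sup, fix $[\u_h,p_h]\in{\cal X}_h$ and build the test function in three stages. First, the choice $[\vt_h,q_h]=[\u_h,p_h]$ makes the skew terms $(\nabla p_h,\u_h)-(\nabla p_h,\u_h)$ cancel, leaving $\mu\Vert\nabla\times\u_h\Vert^2+\tau_p\Vert P_h^\bot(\nabla p_h)\Vert^2+\tau_\u\Vert P_h^\bot(\nabla\cdot\u_h)\Vert^2$, which controls three of the four squared contributions to $\Vert[\u_h,p_h]\Vert_{\rm OSGS}^2$ (note $\frac{\ell^2}{\mu}=\frac1{c_p}\tau_p$). Second, to recover the remaining pressure term $\frac{h^2}{\mu}\Vert P_h(\nabla p_h)\Vert^2$ I add to the velocity component a small multiple $\beta\frac{h^2}{\mu}P_h(\nabla p_h)$: this produces $\beta\frac{h^2}{\mu}(\nabla p_h,P_h(\nabla p_h))=\beta\frac{h^2}{\mu}\Vert P_h(\nabla p_h)\Vert^2$, while the curl and divergence cross-terms it generates are absorbed by the first-stage quantities after an inverse estimate (trading $\Vert\nabla\times P_h(\nabla p_h)\Vert$ and $\Vert\nabla\cdot P_h(\nabla p_h)\Vert$ for $h^{-1}\Vert P_h(\nabla p_h)\Vert$), the scalings of $\tau_p,\tau_\u$, the bound $h\lesssim\ell$, and a Young inequality with $\beta$ small. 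Third---the delicate point---the $L^2$-control of $\u_h$ is obtained without enlarging the test function. Writing the continuous Helmholtz decomposition $\u_h=\nabla\phi+\bm z$ with $\phi\in H^1_0(\Omega)$, $\nabla\cdot\bm z=0$, the Poincar\'e--Friedrichs inequality for $H_0({\textbf{curl}},\Omega)$ fields of vanishing divergence gives $\Vert\bm z\Vert\lesssim\ell\Vert\nabla\times\bm z\Vert=\ell\Vert\nabla\times\u_h\Vert$; and, integrating by parts elementwise---legitimate because $\u_h$ is continuous, so $\u_h\cdot\bm n$ has no interelement jumps---one gets, with $q_h^\ast:=P_{{\cal Q}_h}\phi$,
\begin{align*}
\Vert\nabla\phi\Vert^2=(\u_h,\nabla\phi)=(\u_h,\nabla q_h^\ast)-(P_h^\bot(\nabla\cdot\u_h),\phi-q_h^\ast),
\end{align*}
where $(\u_h,\nabla q_h^\ast)=B_{\rm OSGS}([\u_h,p_h],[0,-q_h^\ast])+\tau_p(P_h^\bot(\nabla p_h),P_h^\bot(\nabla q_h^\ast))$ and, by $H^1$-stability of the $L^2$-projection on quasi-uniform meshes, $\Vert\nabla q_h^\ast\Vert\lesssim\Vert\nabla\phi\Vert$, $\Vert\phi-q_h^\ast\Vert\lesssim h\Vert\nabla\phi\Vert$. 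Dividing by $\Vert\nabla\phi\Vert$ and using $\frac{\mu}{\ell^2}\tau_p^2=c_p\tau_p$ and $\frac{\mu h^2}{\ell^2}=\tau_\u/c_\u$ shows that $\frac{\mu}{\ell^2}\Vert\u_h\Vert^2$ is bounded by the first-stage quantities plus a multiple of $\big(\sup_{[\vt_h,q_h]}B_{\rm OSGS}([\u_h,p_h],[\vt_h,q_h])/\Vert[\vt_h,q_h]\Vert_{\rm OSGS}\big)^2$; collecting the three stages, checking $\Vert[\vt_h,q_h]\Vert_{\rm OSGS}\lesssim\Vert[\u_h,p_h]\Vert_{\rm OSGS}$ for the assembled test function, and one more Young inequality yield $K_{B_{\rm OSGS}}>0$.

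Consistency is immediate: the solution $[\u,p]$ of \Eq{source} has $p\equiv0$ (homogeneous boundary datum) and $\nabla\cdot\u=0$ ($\f$ solenoidal), so both stabilization terms vanish at $[\u,0]$ and $B_{\rm OSGS}([\u,0],[\vt_h,q_h])=B([\u,0],[\vt_h,q_h])=(\f,\vt_h)$; subtracting \Eq{stabsource} gives $B_{\rm OSGS}([\u-\u_h,p-p_h],[\vt_h,q_h])=0$ for all $[\vt_h,q_h]\in{\cal X}_h$. For the error bound I split $[\u-\u_h,p-p_h]=[\u-P_{{\cal V}_h}\u,p-q_h]+[P_{{\cal V}_h}\u-\u_h,q_h-p_h]$, take the infimum over $q_h$ in the first (approximation) bracket, and bound the norm of the second (discrete) bracket by the inf-sup, using the orthogonality above to replace $B_{\rm OSGS}$ of that bracket by $B_{\rm OSGS}([P_{{\cal V}_h}\u-\u,q_h-p],[\w_h,r_h])$. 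The role of the $L^2$-projection $P_{{\cal V}_h}\u$ is that $\u-P_{{\cal V}_h}\u\perp{\cal V}_h$: this kills the ${\cal V}_h$-component of $(\nabla r_h,\u-P_{{\cal V}_h}\u)$, leaving $(P_h^\bot(\nabla r_h),\u-P_{{\cal V}_h}\u)$, which is bounded by $\Vert[\w_h,r_h]\Vert_{\rm OSGS}$ times the $\frac{\mu^{1/2}}{\ell}\Vert\u-P_{{\cal V}_h}\u\Vert$ piece of the projection-error norm, and likewise tames the divergence stabilization cross-term (via $\nabla\cdot\u=0$ and an inverse estimate on $\w_h$). The one term not absorbable into $\Vert[\u-P_{{\cal V}_h}\u,p-q_h]\Vert_{\rm OSGS}$ is $\mu(\nabla\times(\u-P_{{\cal V}_h}\u),\nabla\times\w_h)$, since $P_{{\cal V}_h}$ does not commute with the curl; estimating $\mu^{1/2}\Vert\nabla\times(\u-P_{{\cal V}_h}\u)\Vert$ by a direct ${\cal V}_h$-approximation of $\nabla\times\u$ (an inverse estimate lets one pass from $\nabla\times P_{{\cal V}_h}\u$ to an arbitrary $\vt_h\in{\cal V}_h$) produces the extra term $\inf_{\vt_h\in{\cal V}_h}\mu^{1/2}\Vert\nabla\times\u-\vt_h\Vert$ of \Eq{errest3}.

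The main obstacle is the third stage of the inf-sup proof---recovering $\Vert\u_h\Vert_{L^2}$---because with nodal spaces there is no discrete Friedrichs inequality and the divergence constraint is enforced only weakly; the resolution is to lean on the \emph{continuous} Helmholtz decomposition and Poincar\'e--Friedrichs inequality and to verify, through the parameter scalings, that the non-solenoidal content of $\u_h$ is already dominated by the stabilization. A secondary, milder difficulty---responsible for \Eq{errest3} being weaker than the estimate of Theorem~\ref{th:th1}---is that $B_{\rm OSGS}$ is not uniformly bounded in $\Vert\cdot\Vert_{\rm OSGS}$ (the off-diagonal couplings $(\nabla q,\cdot)$ and $(\nabla r,\cdot)$ carry a factor $\ell/h$), circumvented by the choice $\vt_h=P_{{\cal V}_h}\u$ together with the extra curl-approximation term.
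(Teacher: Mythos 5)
The paper does not actually prove Theorem~\ref{th:th2}: it imports the statement from Theorems 3.3 and 3.4 of \cite{badia-codina-2011-1} ("Stability and convergence is proved in \cite{badia-codina-2011-1}"), so there is no in-paper argument to compare yours against line by line. Judged on its own terms, your reconstruction follows the same architecture as that reference and is sound at the level of a sketch: a discrete inf-sup estimate built by assembling the test function in stages (the diagonal choice; a velocity perturbation $\beta\frac{h^2}{\mu}P_h(\nabla p_h)$ to recover the $h$-weighted pressure term; and the $L^2$-control of $\u_h$ via the continuous Helmholtz decomposition together with a Poincar\'e-type inequality for divergence-free fields of $H_0({\textbf{curl}},\Omega)$), followed by consistency (here $p\equiv 0$ and $\nabla\cdot\u=0$, so both stabilization terms vanish at the exact solution) and a Strang-type argument in which the choice $\vt_h=P_{{\cal V}_h}(\u)$ accounts for both the appearance of the $L^2$-projection and the extra curl-approximation term in \Eq{errest3}. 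Your diagnosis of the two real difficulties --- no discrete Friedrichs inequality for nodal spaces, and the lack of uniform boundedness of $B_{\rm OSGS}$ in the OSGS norm --- correctly explains why the estimate takes the form \Eq{errest3} rather than a clean C\'ea-type bound. Two points deserve to be made explicit rather than left implicit: the Helmholtz/Poincar\'e step needs the usual topological hypotheses on $\Omega$ (simply connected, connected boundary), and the elementwise integration by parts yielding $(\nabla\cdot\u_h,\phi-q_h^\ast)$ with no interface contributions uses the full $\mathcal{C}^0$ continuity of $\u_h$, i.e., it is specific to nodal interpolations; likewise, dropping $(P_h(\nabla\cdot\u_h),\phi-q_h^\ast)$ requires $q_h^\ast$ to be the $L^2$-projection of $\phi$ onto the \emph{same} scalar space onto which the divergence is projected. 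Neither is a fatal gap, but both are load-bearing.
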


This result corresponds to Theorems 3.3 and 3.4 in \cite{badia-codina-2011-1}. Note that in this reference the possibility of using discontinuous interpolations and variable physical properties is taken into account, whereas here we are considering continuous interpolations and a constant $\mu$. Note also that the convergence result obtained is optimal. The same comments as for the AG formulation regarding the regularity of the continuous solution apply in this case.

As for the AG formulation, we also have the following corollary.

\begin{corollary}\label{th:cor2}
Under the assumptions of Corollary~\ref{th:cor1}, estimate \Eq{l2l2} also holds for the OSGS formulation.
\end{corollary}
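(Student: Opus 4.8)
The plan is to mirror the proof of Corollary~\ref{th:cor1}, simply replacing the AG error estimate \Eq{errest2} by its OSGS counterpart \Eq{errest3}. First I would observe that the scaled $L^2(\Omega)$-norm of the velocity error is dominated by the full OSGS norm of the error pair, since every term in \Eq{osgsnorm} is nonnegative:
\begin{align*}
\frac{\mu^{1/2}}{\ell} \Vert \u - \u_h \Vert_{L^2(\Omega)} \leq \Vert [\u - \u_h, p - p_h] \Vert_{\rm OSGS} .
\end{align*}
Since $\f$ is solenoidal, the continuous pressure satisfies $p = 0$, so in \Eq{errest3} I may take $q_h = 0$; the right-hand side then collapses to
\begin{align*}
\Vert [\u - \u_h, p - p_h] \Vert_{\rm OSGS} \lesssim \Vert [\u - P_{{\cal V}_h}(\u), 0] \Vert_{\rm OSGS} + \inf_{\vt_h\in {\cal V}_{h}} \mu^{1/2} \Vert \nabla\times\u - \vt_h\Vert .
\end{align*}
Expanding the first summand with \Eq{osgsnorm}, the whole bound reduces to three quantities: the weighted error $\frac{\mu^{1/2}}{\ell}\Vert\u - P_{{\cal V}_h}(\u)\Vert_{L^2(\Omega)}$, the curl contribution $\mu^{1/2}\Vert\nabla\times(\u - P_{{\cal V}_h}(\u))\Vert$, and the term $\inf_{\vt_h\in {\cal V}_h}\mu^{1/2}\Vert\nabla\times\u - \vt_h\Vert$ already present.

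Next I would estimate each of these by $\mu^{1/2}h^s\Vert\nabla\times\u\Vert_{H^s(\Omega)} + \frac{\mu^{1/2}}{\ell}h^s\Vert\u\Vert_{H^s(\Omega)}$, which is exactly the bound reached at the end of the proof of Corollary~\ref{th:cor1}. For the weighted $L^2$-error of $\u$ this is the standard approximation property of the $L^2$-projection on quasi-uniform meshes; for $\inf_{\vt_h}\mu^{1/2}\Vert\nabla\times\u - \vt_h\Vert$ it is the direct approximation of $\nabla\times\u$, invoking — when $0 < s < 1$ — the same ability of the (Powell-Sabin type) finite element spaces to interpolate fractionally regular functions that is used for the AG formulation. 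With this in hand, inserting the elliptic regularity assumption \Eq{ellreg} converts the bound into $\lesssim \frac{\ell}{\mu^{1/2}}(h/\ell)^s\Vert\f\Vert_{L^2(\Omega)}$, which is estimate~\Eq{l2l2}.

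The step I expect to require genuine care is the curl contribution $\mu^{1/2}\Vert\nabla\times(\u - P_{{\cal V}_h}(\u))\Vert$: because $P_{{\cal V}_h}$ is the $L^2$-projection and $\u$ need only be fractionally regular, a crude inverse estimate would cost a factor $h^{-1}$ and is useless here. The way around it is to quote the corresponding step of the source-problem analysis in \cite{badia-codina-2011-1}, where this term is controlled at optimal order jointly with the standalone term $\inf_{\vt_h}\mu^{1/2}\Vert\nabla\times\u - \vt_h\Vert$ — this is precisely why that extra term appears in \Eq{errest3}. Once that estimate is quoted, the remainder of the argument is identical to Corollary~\ref{th:cor1}, and the remarks there on the regularity of the continuous solution carry over unchanged.
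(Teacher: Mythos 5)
Your argument is correct and follows essentially the same route as the paper's (very brief) proof: reduce \Eq{l2l2} to the OSGS error estimate \Eq{errest3} with $q_h=0$, then conclude by standard interpolation estimates together with the elliptic regularity property \Eq{ellreg}. The one delicate term you flag, $\mu^{1/2}\Vert\nabla\times(\u-P_{{\cal V}_h}(\u))\Vert$, which you propose to control by quoting the source-problem analysis of the reference, is handled in the paper by invoking the $L^2(\Omega)$- and $H^1(\Omega)$-stability of the $L^2(\Omega)$-projection $P_{{\cal V}_h}$ on quasi-uniform meshes --- the same mechanism, just named explicitly.
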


\begin{proof}
Again this result is an immediate consequence of standard interpolation estimates and the elliptic regularity property \Eq{ellreg}, now using the $L^2(\Omega)$ and $H^1(\Omega)$ stability of the $L^2(\Omega)$-projection $P_{{\cal V}_h}$.
\end{proof}

\section{Numerical analysis of the eigenvalue problem}

For the standard Galerkin method presented in Section~2, the analysis of the EVP using 2D Powell-Sabin meshes is directly presented in~\cite{boffi-guzman-neilan-2022}, without relying on the approximation properties of the formulation for the source problem, since this would be singular. However, for the stabilized formulations described in Section~3 we can apply the general strategy of proving convergence of eigenvalues and eigenfunctions using the results obtained for the source problem. This is what we do next. We assume in what follows that the polynomial order of the FE interpolation is higher or equal than the regularity of the solution. 

As usual in the FE analysis of spectral problems \cite{boffi2010} (see also \cite{turkbofficodina2016}), having the existence and uniqueness of solutions to \Eq{source} and \Eq{stabsource}, we can define the solution operators  $T, T_h:L^2(\Omega)^d\rightarrow L^2(\Omega)^d$  so that, for any $\f \in L^2(\Omega)^d$,  $T \f = \u$ and $T_h \f = \u_h$  are the vector field components of the solutions to \Eq{source} and \Eq{stabsource}, respectively. From the convergence results of the source problems presented in the previous section, we can establish the following operator convergence
\[
\Vert T-T_h \Vert_{{\cal L}(L^2(\Omega)^d)}\to 0 \quad \hbox{as}~h\to 0 .
\]
This sets forth that the solutions of the discrete
problem \Eq{stabform} converge to those of \Eq{max_evp2} with no
spurious solutions. In particular, the convergence analysis follows along the lines of the abstract Babu\v ska--Osborn theory \cite{BaOs,boffi2010}.

We recall the main results related to the approximation of the eigensolutions in the following theorems. The first theorem states the convergence of the eigensolutions and the absence of spurious modes.

\begin{theorem}
Under the regularity assumptions of Corollary~\ref{th:cor1}, let $\lambda$ be an eigenvalue of~\eqref{eq:varevp} with multiplicity $m$. Then there are exactly $m$ eigenvalues of~\eqref{eq:varevpG}, counted with their multiplicities, that converge to $\lambda$. Moreover, given a generic compact set $K$ in the real line that does not contain any eigenvalues of~\eqref{eq:varevp}, for $h$ small enough there are no discrete eigenvalues of~\eqref{eq:varevpG} that are in $K$.
\label{th:1}
\end{theorem}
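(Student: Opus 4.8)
The plan is to rephrase the mixed eigenvalue problem~\eqref{eq:varevp} and its stabilized discretization~\eqref{eq:stabform} (for either choice $B_{\rm S}=B_{\rm AG}$ or $B_{\rm S}=B_{\rm OSGS}$) as eigenvalue problems for the solution operators $T$ and $T_h$ introduced above, and then to invoke the abstract Babu\v{s}ka--Osborn spectral approximation theory~\cite{BaOs,boffi2010}, whose essential inputs are the compactness of $T$ on $L^2(\Omega)^d$ and the operator convergence $\Vert T-T_h\Vert_{{\cal L}(L^2(\Omega)^d)}\to 0$. First I would make the eigenpair correspondence precise. If $[\u,p,\lambda]$ solves~\eqref{eq:varevp} with $\lambda\neq 0$ then, $\u$ being solenoidal, $\f:=\lambda\u$ is an admissible datum for~\eqref{eq:source} and $[\u,p]$ is its unique solution, so $T\u=\lambda^{-1}\u$; conversely, any nonzero eigenvalue $\mu_0$ of $T$ with eigenfunction $\u$ gives, through $\lambda=\mu_0^{-1}$ and the scalar part $p$ of the solution of~\eqref{eq:source} with datum $\lambda\u$, an eigentriple of~\eqref{eq:varevp} (indeed $p=0$, since $\u\in\mathrm{range}(T)$ is solenoidal). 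Testing the equations~\eqref{eq:source} for two data against each other and using that $\mathrm{range}(T)$ is $L^2(\Omega)$-orthogonal to $\nabla H^1_0(\Omega)$, one sees that $T$ is self-adjoint and non-negative on $L^2(\Omega)^d$; in any case, its nonzero eigenvalues are exactly the reciprocals of the eigenvalues of~\eqref{eq:varevp}, with the same multiplicities. The same reasoning applied to~\eqref{eq:stabsource} and~\eqref{eq:stabform} yields a multiplicity-preserving bijection between the nonzero eigenvalues $\lambda_h$ of~\eqref{eq:stabform} and the nonzero eigenvalues $\mu_h=\lambda_h^{-1}$ of $T_h$, so it suffices to prove the analogous statements for the spectra $\sigma(T)$ and $\sigma(T_h)$.

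Next I would verify the hypotheses of the abstract theory. Compactness of $T$ follows from the elliptic regularity property~\eqref{eq:ellreg}: it gives $T\f\in H^s(\Omega)^d$ with $\Vert T\f\Vert_{H^s(\Omega)}\lesssim\Vert\f\Vert_{L^2(\Omega)}$ for some $s>0$, and $H^s(\Omega)^d\hookrightarrow L^2(\Omega)^d$ is compact since $\Omega$ is bounded. The uniform boundedness of $T_h$ is part of the well-posedness statements of Theorems~\ref{th:th1} and~\ref{th:th2}, and $\Vert T-T_h\Vert_{{\cal L}(L^2(\Omega)^d)}\to 0$ is precisely the $L^2(\Omega)$--$L^2(\Omega)$ estimate~\eqref{eq:l2l2} of Corollaries~\ref{th:cor1} and~\ref{th:cor2}, read as a bound uniform over $\Vert\f\Vert_{L^2(\Omega)}\le 1$ (note that $T\f$ is always solenoidal, so~\eqref{eq:l2l2} applies with no restriction on $\f$). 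No self-adjointness of $T_h$ is needed: the stabilized discrete operator need not be symmetric, but the Babu\v{s}ka--Osborn conclusions hold for arbitrary compact operators, eigenvalues being counted with their algebraic multiplicities.

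The conclusion is then immediate. Fix a nonzero eigenvalue $\mu_0=\lambda^{-1}$ of $T$ of algebraic multiplicity $m$ and a sufficiently small circle $\gamma\subset\mathbb{C}$ centered at $\mu_0$ enclosing no other point of $\sigma(T)$; for $h$ small the Riesz spectral projection of $T_h$ associated with the part of $\sigma(T_h)$ inside $\gamma$ has rank $m$ and the $m$ eigenvalues of $T_h$ inside $\gamma$ converge to $\mu_0$ as $h\to0$, which, transported through $\lambda=\mu^{-1}$, is the first assertion. For the absence of spurious modes, let $\mathcal{K}\subset\mathbb{R}$ be compact and disjoint from the eigenvalues of~\eqref{eq:varevp}, and choose $\delta>0$ below $(\sup_h\Vert T_h\Vert)^{-1}$ (finite by the uniform bound on $T_h$). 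An eigenvalue $\lambda_h$ of~\eqref{eq:stabform} with $|\lambda_h|<\delta$ would yield an eigenvalue $\mu_h=\lambda_h^{-1}$ of $T_h$ with $|\mu_h|>\delta^{-1}>\Vert T_h\Vert$, contradicting $|\mu_h|\le\Vert T_h\Vert$; and the image $\mathcal{K}_\delta:=\{\lambda^{-1}:\lambda\in\mathcal{K},\ |\lambda|\ge\delta\}$ is a compact subset of $\mathbb{C}$ avoiding $0$ (as $\mathcal{K}$ is bounded) and avoiding $\sigma(T)\setminus\{0\}$ (a point $\mu^{-1}\in\mathcal{K}$ would be an eigenvalue of~\eqref{eq:varevp} lying in $\mathcal{K}$), hence $\mathcal{K}_\delta\cap\sigma(T)=\emptyset$, and the usual resolvent-perturbation argument based on $\Vert T-T_h\Vert\to 0$ gives $\mathcal{K}_\delta\cap\sigma(T_h)=\emptyset$ for $h$ small; combining the two cases, no eigenvalue $\lambda_h$ of~\eqref{eq:stabform} lies in $\mathcal{K}$ once $h$ is small enough.

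The step I expect to be the main obstacle is the operator reformulation of the first paragraph: one must justify that $T$, notwithstanding the saddle-point structure of~\eqref{eq:source} and its infinite-dimensional kernel of gradient fields, is genuinely a compact (self-adjoint, non-negative) operator on all of $L^2(\Omega)^d$, and that algebraic multiplicities are preserved exactly in passing between eigentriples $[\u_h,p_h,\lambda_h]$ of~\eqref{eq:stabform} and eigenpairs of $T_h$. Once the problem is placed correctly at the operator level, the remainder is a direct appeal to the source-problem results of Section~\ref{sec:stab} and to the classical spectral approximation theory.
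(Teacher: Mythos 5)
Your proposal is correct and follows essentially the same route as the paper: both reduce the theorem to the norm convergence $\Vert T-T_h\Vert_{{\cal L}(L^2(\Omega)^d)}\to 0$ obtained from the $L^2$ estimates of Corollaries~\ref{th:cor1} and~\ref{th:cor2}, and then invoke the Babu\v{s}ka--Osborn theory. You simply make explicit several steps (the eigenpair correspondence between~\eqref{eq:varevp} and the operator $T$, compactness, and the spurious-mode argument via spectral projections) that the paper leaves to the cited references.
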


\begin{proof}
From the results of Corollaries~\ref{th:cor1} and~\ref{th:cor2}, we obtain that for all $\f\in L^2(\Omega)^d$
\[
\|\u-\u_h\|_{L^2(\Omega)}=\|T\f-T_h\f\|_{L^2(\Omega)}\lesssim  \varphi(h) \|\f\|_{L^2(\Omega)},
\]
with $\varphi(h)\to 0$ as $h\to 0$, that is, we have the convergence in norm $\Vert T-T_h \Vert_{{\cal L}(L^2(\Omega)^d)}\to 0 $ as $h$ tends to zero. From the standard Babu\v ska--Osborn theory~\cite{BaOs}, this implies the theorem.
\end{proof}

The second theorem states the rate of convergence of eigenvalues and eigenfunctions.

\begin{theorem}

Let $\lambda$ be an eigenvalue of~\eqref{eq:varevp} with multiplicity $m$ and with an eigenspace composed of eigenfunctions $\u$ with the following regularity for some $r>0$
\[
\aligned
&\u\in H^r(\Omega),\\
&\nabla\times\u\in H^r(\Omega).
\endaligned
\]
Let us denote by $\lambda_h^i$, $i=1,\dots,m$, the $m$ discrete eigenvalues corresponding to $\lambda$ according to Theorem~\ref{th:1}. Then we have the following error estimates:
\[
\aligned
&|\lambda-\lambda_h^i|=O(h^{2r}),&&i=1,\dots,m,\\
&\hat\delta(E,E_h)=O(h^r),
\endaligned
\]
where $E$ is the eigenspace associated with $\lambda$, $E_h$ is the direct sum of the eigenspaces associated with $\lambda^i_h$, $i=1,\dots,m$, and $\hat\delta$ denotes the gap between Hilbert subspaces in the energy norm.
\end{theorem}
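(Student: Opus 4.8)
The plan is to invoke the abstract spectral approximation theory of Babu\v{s}ka--Osborn, which gives the rates of convergence of eigenvalues and eigenfunctions in terms of the approximation properties of the solution operators $T$ and $T_h$ restricted to the eigenspace $E$. Since $T$ is a bounded, self-adjoint (with respect to the $L^2(\Omega)^d$ inner product) and compact operator, its spectrum is discrete, and the eigenvalues $\lambda$ of \eqref{eq:varevp} correspond to the reciprocals $1/\lambda$ of the nonzero eigenvalues of $T$. The key quantities in the Babu\v{s}ka--Osborn estimates are the gap $\hat\delta(E,E_h)$ between the exact and discrete eigenspaces and the quantity $\|(T-T_h)|_E\|_{{\cal L}(L^2(\Omega)^d)}$, which controls the eigenvalue error; more precisely, one has $\hat\delta(E,E_h)\lesssim \|(T-T_h)|_E\|$ and $|\lambda-\lambda_h^i|\lesssim \|(T-T_h)|_E\|^2$ (the quadratic improvement for the eigenvalues being a consequence of self-adjointness, via the identity involving $((T-T_h)\u,\u)$ for eigenfunctions $\u$).

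First I would fix an $L^2(\Omega)^d$-orthonormal basis $\{\u_j\}_{j=1}^m$ of the exact eigenspace $E$, so that each $\u_j$ satisfies $T\u_j = \lambda^{-1}\u_j$ and, by hypothesis, $\u_j\in H^r(\Omega)^d$ with $\nabla\times\u_j\in H^r(\Omega)^d$. Then for an arbitrary unit vector $\u = \sum_j \alpha_j \u_j \in E$, I would estimate $\|(T-T_h)\u\|_{L^2(\Omega)} = \|T\u - T_h\u\|_{L^2(\Omega)} = \|\lambda^{-1}\u - T_h\u\|_{L^2(\Omega)}$. The vector $T_h\u$ is the discrete solution of the source problem \eqref{eq:stabsource} with data $\f = \u$ (which is indeed solenoidal, being an eigenfunction); hence $T\u = \lambda^{-1}\u$ is the exact solution of \eqref{eq:source} with the same data. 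Applying the error estimate for the source problem — either Theorem~\ref{th:th1}/\eqref{eq:errest2} for the AG formulation or Theorem~\ref{th:th2}/\eqref{eq:errest3} for the OSGS formulation — followed by the interpolation estimate \eqref{eq:estimatesmooth} with $t=\min\{r,k+1\}$ and the standing assumption $k\ge r$ so that $t=r$, I get
\begin{align*}
\|(T-T_h)\u\|_{L^2(\Omega)} \lesssim h^{r} \left( \|\u\|_{H^r(\Omega)} + \|\nabla\times\u\|_{H^r(\Omega)}\right) \lesssim h^{r},
\end{align*}
where the last bound uses that $E$ is finite-dimensional, so all norms on $E$ are equivalent and the constant depends only on $\lambda$ and $E$. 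Taking the supremum over unit $\u\in E$ gives $\|(T-T_h)|_E\|_{{\cal L}(L^2(\Omega)^d)} \lesssim h^{r}$.

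Plugging this into the Babu\v{s}ka--Osborn estimates then yields $\hat\delta(E,E_h) = O(h^r)$ directly, and $|\lambda-\lambda_h^i| = O(h^{2r})$ from the quadratic bound. For the eigenvalue estimate I would be slightly careful to justify the squaring: it follows from the self-adjointness of $T$ and $T_h$ (inherited from the symmetry of the source bilinear form modulo the skew terms in $B$, which vanish on the solenoidal/constraint-consistent subspace, together with the symmetry of the stabilization terms), using the standard argument that for a normalized discrete eigenfunction one controls $|\lambda^{-1} - (\lambda_h^i)^{-1}|$ by $((T-T_h)\u,\u)$-type expressions plus higher-order terms. The main obstacle is precisely this verification of self-adjointness of the solution operators in the stabilized setting — the forms $B_{\rm AG}$ and $B_{\rm OSGS}$ are not symmetric as written because of the $(\nabla p,\vt)-(\nabla q,\u)$ pair, so one must argue that $T$ and $T_h$ are nevertheless self-adjoint as operators on $L^2(\Omega)^d$ (equivalently, restrict attention to the $\u$-component and note that the map $\f\mapsto\u$ is symmetric), which is what secures the doubled rate $h^{2r}$ rather than merely $h^r$ for the eigenvalues. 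Once that point is settled, the remaining steps are routine citations to \cite{BaOs,boffi2010} and the source-problem estimates already recorded above.
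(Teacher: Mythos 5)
Your proposal is correct and follows essentially the same route as the paper, whose proof is simply a one-line appeal to the Babu\v{s}ka--Osborn theory combined with the source-problem error estimates of Theorems~\ref{th:th1} and~\ref{th:th2}; you have merely filled in the standard details (the bound on $\Vert (T-T_h)\vert_E\Vert$ via the regularity of the eigenfunctions, and the doubling of the rate for the eigenvalues via self-adjointness of the solution operators). Your remark on verifying that $\f\mapsto\u$ is symmetric despite the skew-symmetric $(\nabla p,\vt)-(\nabla q,\u)$ pair is a legitimate point that the paper leaves implicit.
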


\begin{proof}
Again, this result follows from the standard Babu\v ska--Osborn theory~\cite{BaOs} and the error estimates available for the source problem.
\end{proof}

The result about the eigenfunctions convergence can be made more explicit by interpreting the definition of gap as follows. Let $\u$ be an eigenfunction associated with $\lambda$ and denote by ${\bm\phi}^1_h, \ldots ,{\bm \phi}^m_h$ the eigenfunctions associated with the $m$ discrete eigenvalues converging to $\lambda$. Then there exists a linear combination $\u_h\in\hbox{\rm span}\{{\bm\phi}^1_h,\ldots,{\bm\phi}^m_h\}$ such that
\[
\vertiii{\u-\u_h}=O(h^r),
\]
where $\vertiii{\cdot}$ means $\|\cdot\|_{\text{AG}}$ or $\|\cdot\|_{\text{OSGS}}$ for the AG or OSGS formulation, respectively.

\section{Numerical results}
\label{sec:numres}

We present three numerical tests for the approximation of the Maxwell EVP with $\mu=1$ on three different domains, namely, a square domain, a flipped L-shape domain, and a cracked square domain, all of them in 2D. We consider the square domain for validation purposes, since the analytical solutions are available. The L-shaped domain consists of non-smooth solutions and hence it is a standard benchmark to test the methodology for singular problems. The cracked square domain contains a slit which makes it a distinguished candidate for serving as a challenging task with a solution that exhibits a strong singularity. 

The standard Galerkin method \Eq{galerkin}, from now on SG, and the stabilized (AG and OSGS) formulations \Eq{stabform} are implemented using criss-cross (CC) and Powell-Sabin (PS) meshes. We have also considered a sequence of uniform right diagonal meshes for a distinct case (see Section \ref{subsec:sqd}). The CC mesh sequences are used as it is well known that the gradients are well represented by the given interpolations, although there is no theoretical support for their performance in the Maxwell problem. In fact, in~\cite{badia-codina-2009-2} it was shown through numerical experiments that they provide good results for the source problem using the AG formulation, and here we will test these meshes for the EVP. The PS meshes are chosen due to their convergence results as we have already stated. For all the unknowns we have employed equal order (for both $\u_h$ and $p_h$) of linear ($P_1$) and quadratic ($P_2$) interpolations, even though much of the focus is placed on the results obtained from the former. 

All the results we present below have been obtained by means of computer programs written by us using Matlab. The eigenvalues are obtained by its built-in function \emph{eigs} that calculates a subset of eigensolutions of a (generalized) matrix eigenvalue system. For the SG formulation, the discrete spectrum consists of a number of zeros; the discrete frequencies approximating zero are eliminated to present the first nonzero values. The correctness of these approximations is verified using Matlab's \emph{eig} function calculating all of the discrete eigenvalues for sizes that fit well in memory. However, the restriction of eliminating a huge number of (machine) zeros from the approximate spectrum is alleviated when considering the stabilized formulations which, by construction, result in strictly positive eigenvalues for all the cases considered. 

The values of the algebraic constants in the definition of the stabilization parameters, $c_{\u}$ and $c_p$, and the characteristic length $\ell$ can be taken in a wide range, influencing the accuracy while preserving similar convergence behaviors. The specific values taken for the simulations are given for each test domain individually. We denote by $N$ the  number of divisions in each direction for the square domain, and the number of division in one of the short edges for the L-shaped domain. The tables we provide in the sequel list the approximated eigenvalues together with their rates of convergence towards the reference values indicated in parentheses. 

Regarding the boundary conditions, we have considered ${\bm n}\times \u_h = {\bf 0}$ on the boundaries. At convex corners of the computational domain we have prescribed both components of $\u_h$ to zero. However, the situation is more delicate at re-entrant corners; the way to impose boundary conditions there is explained for the two examples in which this situation is found.

\subsection{The square domain}
\label{subsec:sqd}

For the first numerical test, we consider an approximation of Maxwell's EVP on the square  $\Omega_1=\left] 0, \pi\right[^2$ (see Figure~\ref{fig:sq_meshp1p2}). In this case, the exact solutions are known, and the eigenvalues are given as $\lambda_{m,n}=m^2+n^2$, where $m,n=0,1,\ldots$, and $m+n \neq 0$. It is well known that SG formulations may result in unphysical values even for problems with smooth solutions unless certain conditions on the mesh topology are satisfied. More specifically, adequate gradients of the solution should be provided by the discrete space to ensure that the zero frequency is exactly approximated by vanishing discrete eigenvalues. An example to the existence of spurious eigenvalues has already been presented in \cite{boffi2010}, in which the SG scheme for the discretization of the problem on $\Omega_1$ has been used with a sequence of CC meshes. Besides realization of this issue, in order to compare the SG formulation with the stabilized ones on the same meshes, we compute the corresponding eigenvalues using CC meshes and PS meshes, and list the results in the following. Table \ref{tab:o1_p1_standard_CC} lists the first 17 approximate eigenvalues using the SG formulation and $P_1$ elements on CC meshes.  As it is evident from the negative rates, some listed limit values are spurious and are not associated with any true eigenvalues, even though a number of selected ones show a good convergence. This pathology, which occurs even in the case of smooth solutions, is already known (see \cite{boffi2010}) and it is the main motivation for the need of stabilization strategies implemented for nodal elements. 

\begin{figure}[!h]\setlength{\unitlength}{1cm}
\begin{center}
\includegraphics[width=4.6cm, height=4.6cm]{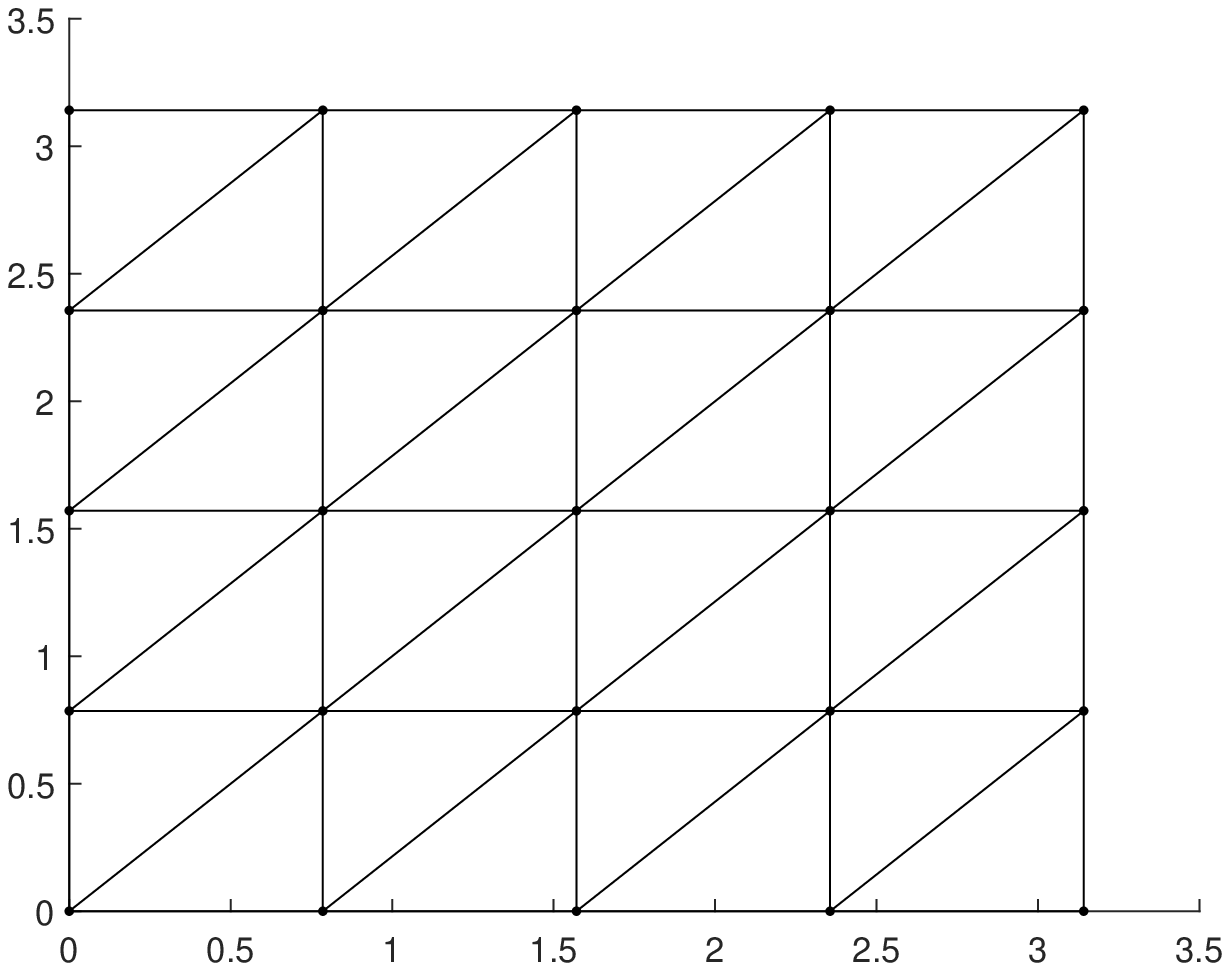}\hspace{1.5cm}
\includegraphics[width=4.6cm, height=4.6cm]{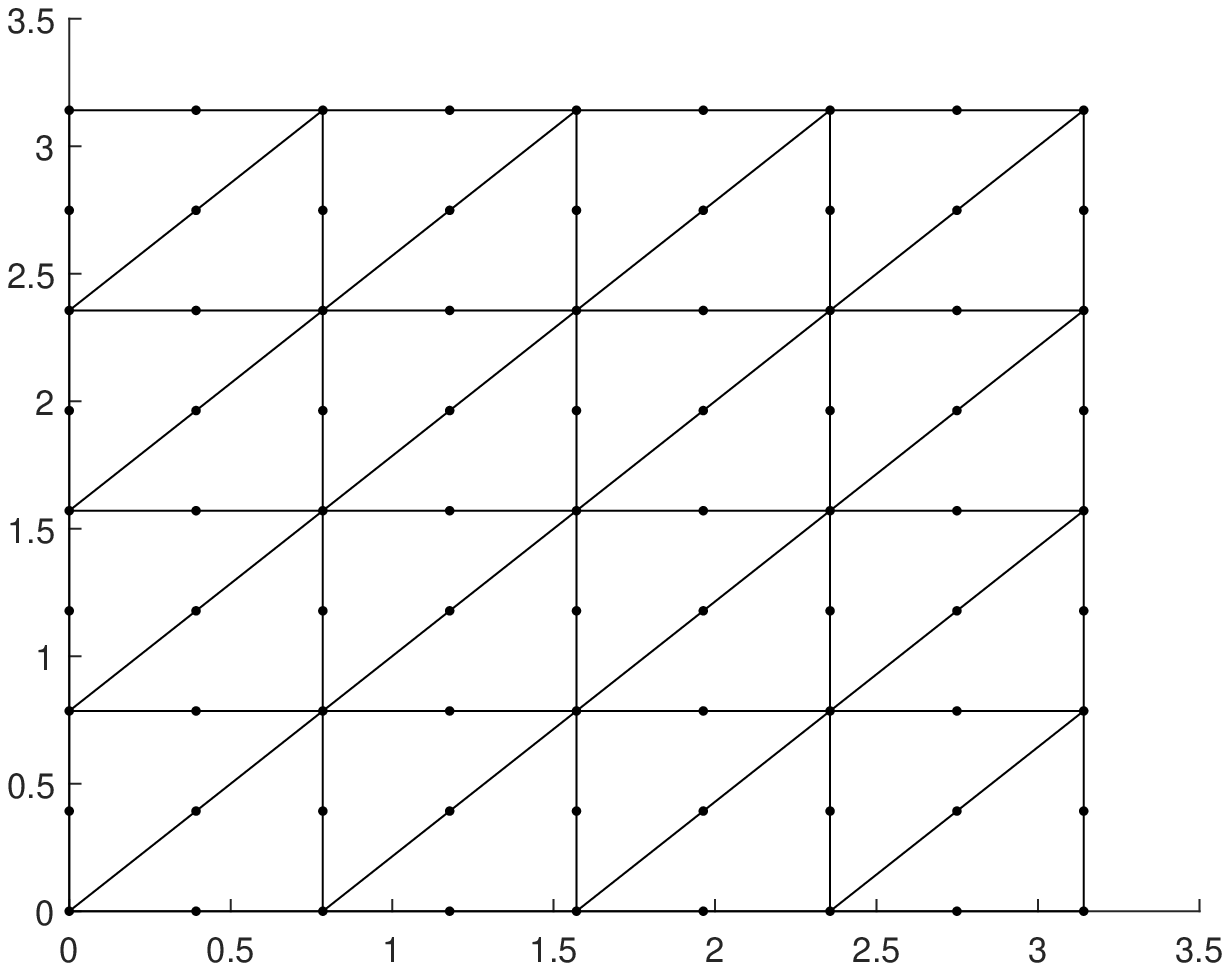}
\end{center}
\caption{Uniform meshes and node distributions used for $P_1$ (left) and  $P_2$ (right) interpolations on $\Omega_1$.}
\label{fig:sq_meshp1p2}
\end{figure}

\begin{table}[!h]
\centering
\small 
\caption{The first 17 eigenvalues on $\Omega_1$ using the SG formulation and $P_1$ elements on CC meshes.}  
\begin{tabular}{c|cccccc}
 \hline \hline  Exact  &    &       & Computed     &     &   
\tabularnewline		 & $N=5$   & $N=10$ & $N=15$  & $N=20$ & $N=25$
\tabularnewline  \hline  
  1.0000 &  1.0109 &  1.0027 (2.0) &  1.0012 (2.0) &  1.0007 (2.0)&  1.0004 (2.0) \\
 1.0000 &  1.0109 &  1.0027 (2.0) &  1.0012 (2.0) &  1.0007 (2.0)&  1.0004 (2.0) \\
 2.0000 &  2.0437 &  2.0110 (2.0) &  2.0049 (2.0) &  2.0027 (2.0)&  2.0018 (2.0) \\
 4.0000 &  4.1719 &  4.0437 (2.0) &  4.0195 (2.0) &  4.0110 (2.0)&  4.0070 (2.0) \\
 4.0000 &  4.1719 &  4.0437 (2.0) &  4.0195 (2.0) &  4.0110 (2.0)&  4.0070 (2.0) \\
 5.0000 &  5.2657 &  5.0683 (2.0) &  5.0304 (2.0) &  5.0171 (2.0)&  5.0110 (2.0) \\
 5.0000 &  5.2657 &  5.0683 (2.0) &  5.0304 (2.0) &  5.0171 (2.0)&  5.0110 (2.0) \\
 8.0000 &  5.7988 &  5.9507 (0.1) &  5.9781 (0.0) &  5.9877 (0.0)&  5.9921 (0.0) \\
 9.0000 &  8.6504 &  8.1746 (-1.2) &  8.0779 (-0.3) &  8.0438 (-0.1)&  8.0281 (-0.1) \\
 9.0000 &  9.8403 &  9.2197 (1.9) &  9.0982 (2.0) &  9.0554 (2.0)&  9.0355 (2.0) \\
10.0000 &  9.8403 &  9.2197 (-2.3) &  9.0982 (-0.4) &  9.0554 (-0.2)&  9.0355 (-0.1) \\
10.0000 & 10.9783 & 10.2710 (1.9) & 10.1213 (2.0) & 10.0684 (2.0)& 10.0438 (2.0) \\
13.0000 & 10.9783 & 10.2710 (-0.4) & 10.1213 (-0.1) & 10.0684 (-0.1)& 10.0438 (-0.0) \\
13.0000 & 12.5826 & 13.4573 (-0.1) & 13.2052 (2.0) & 13.1156 (2.0)& 13.0741 (2.0) \\
16.0000 & 12.5826 & 13.4573 (0.4) & 13.2052 (-0.2) & 13.1156 (-0.1)& 13.0741 (-0.1) \\
16.0000 & 14.3233 & 14.3101 (-0.0) & 14.6791 (0.6) & 14.8163 (0.4)& 14.8814 (0.3) \\
17.0000 & 14.3233 & 14.3101 (-0.0) & 14.6791 (0.4) & 14.8163 (0.2)& 14.8814 (0.1) \\
 \hline
\end{tabular}
\label{tab:o1_p1_standard_CC}
\end{table} 

\begin{table}[!h] 
\centering
\small 
\caption{The first 17 eigenvalues on $\Omega_1$ using the AG formulation and $P_1$ elements on CC meshes.}  
\begin{tabular}{c|cccccc}
 \hline \hline  Exact  &    &       & Computed     &     &   
\tabularnewline		 & $N=5$   & $N=10$ & $N=15$  & $N=20$ & $N=25$
\tabularnewline  \hline  
 1.0000 &  1.0172 &  1.0032 (2.4) &  1.0013 (2.2) &  1.0007 (2.1)&  1.0005 (2.1) \\
 1.0000 &  1.0172 &  1.0032 (2.4) &  1.0013 (2.2) &  1.0007 (2.1)&  1.0005 (2.1) \\
 2.0000 &  2.0507 &  2.0115 (2.1) &  2.0050 (2.1) &  2.0028 (2.0)&  2.0018 (2.0) \\
 4.0000 &  4.2762 &  4.0516 (2.4) &  4.0211 (2.2) &  4.0115 (2.1)&  4.0072 (2.1) \\
 4.0000 &  4.2762 &  4.0516 (2.4) &  4.0211 (2.2) &  4.0115 (2.1)&  4.0072 (2.1) \\
 5.0000 &  5.3897 &  5.0756 (2.4) &  5.0318 (2.1) &  5.0176 (2.1)&  5.0111 (2.0) \\
 5.0000 &  5.3897 &  5.0756 (2.4) &  5.0318 (2.1) &  5.0176 (2.1)&  5.0111 (2.0) \\
 8.0000 &  8.8329 &  8.1838 (2.2) &  8.0796 (2.1) &  8.0444 (2.0)&  8.0283 (2.0) \\
 9.0000 & 10.3876 &  9.2606 (2.4) &  9.1066 (2.2) &  9.0581 (2.1)&  9.0366 (2.1) \\
 9.0000 & 10.3876 &  9.2606 (2.4) &  9.1066 (2.2) &  9.0581 (2.1)&  9.0366 (2.1) \\
10.0000 & 11.7852 & 10.3145 (2.5) & 10.1295 (2.2) & 10.0709 (2.1)& 10.0448 (2.1) \\
10.0000 & 11.7852 & 10.3145 (2.5) & 10.1295 (2.2) & 10.0709 (2.1)& 10.0448 (2.1) \\
13.0000 & 15.6090 & 13.5000 (2.4) & 13.2125 (2.1) & 13.1178 (2.0)& 13.0749 (2.0) \\
13.0000 & 15.6090 & 13.5000 (2.4) & 13.2125 (2.1) & 13.1178 (2.0)& 13.0749 (2.0) \\
16.0000 & 20.1558 & 16.8191 (2.3) & 16.3360 (2.2) & 16.1832 (2.1)& 16.1155 (2.1) \\
16.0000 & 20.1558 & 16.8191 (2.3) & 16.3360 (2.2) & 16.1832 (2.1)& 16.1155 (2.1) \\
17.0000 & 22.0376 & 17.9227 (2.4) & 17.3766 (2.2) & 17.2057 (2.1)& 17.1298 (2.1) \\  
 \hline
\end{tabular}
\label{tab:o1_p1_ag_CC}
\end{table} 
\begin{table}[!h] 
\centering
\small 
\caption{The first 17 eigenvalues on $\Omega_1$ using the OSGS formulation and $P_1$ elements on CC meshes.}  
\begin{tabular}{c|cccccc}
 \hline \hline  Exact  &    &       & Computed     &     &   
\tabularnewline		 & $N=5$   & $N=10$ & $N=15$  & $N=20$ & $N=25$
\tabularnewline  \hline  
  1.0000 &  1.0165 &  1.0032 (2.4) &  1.0013 (2.2) &  1.0007 (2.1)&  1.0005 (2.1) \\
 1.0000 &  1.0165 &  1.0032 (2.4) &  1.0013 (2.2) &  1.0007 (2.1)&  1.0005 (2.1) \\
 2.0000 &  2.0492 &  2.0114 (2.1) &  2.0050 (2.0) &  2.0028 (2.0)&  2.0018 (2.0) \\
 4.0000 &  4.2653 &  4.0511 (2.4) &  4.0210 (2.2) &  4.0115 (2.1)&  4.0072 (2.1) \\
 4.0000 &  4.2675 &  4.0512 (2.4) &  4.0210 (2.2) &  4.0115 (2.1)&  4.0072 (2.1) \\
 5.0000 &  5.3726 &  5.0746 (2.3) &  5.0317 (2.1) &  5.0175 (2.1)&  5.0111 (2.0) \\
 5.0000 &  5.3726 &  5.0746 (2.3) &  5.0317 (2.1) &  5.0175 (2.1)&  5.0111 (2.0) \\
 8.0000 &  8.7944 &  8.1826 (2.1) &  8.0795 (2.1) &  8.0443 (2.0)&  8.0283 (2.0) \\
 9.0000 & 10.3443 &  9.2583 (2.4) &  9.1063 (2.2) &  9.0580 (2.1)&  9.0366 (2.1) \\
 9.0000 & 10.3443 &  9.2583 (2.4) &  9.1063 (2.2) &  9.0580 (2.1)&  9.0366 (2.1) \\
10.0000 & 11.7202 & 10.3087 (2.5) & 10.1287 (2.2) & 10.0707 (2.1)& 10.0448 (2.0) \\
10.0000 & 11.7499 & 10.3090 (2.5) & 10.1287 (2.2) & 10.0707 (2.1)& 10.0448 (2.0) \\
13.0000 & 15.5263 & 13.4941 (2.4) & 13.2117 (2.1) & 13.1177 (2.0)& 13.0749 (2.0) \\
13.0000 & 15.5263 & 13.4941 (2.4) & 13.2117 (2.1) & 13.1177 (2.0)& 13.0749 (2.0) \\
16.0000 & 19.9725 & 16.8118 (2.3) & 16.3350 (2.2) & 16.1829 (2.1)& 16.1154 (2.1) \\
16.0000 & 19.9917 & 16.8129 (2.3) & 16.3350 (2.2) & 16.1829 (2.1)& 16.1154 (2.1) \\
17.0000 & 21.1252 & 17.9058 (2.2) & 17.3740 (2.2) & 17.2050 (2.1)& 17.1296 (2.1) \\
 \hline
\end{tabular}
\label{tab:o1_p1_osgs_CC}
\end{table} 

Tables \ref{tab:o1_p1_ag_CC} and \ref{tab:o1_p1_osgs_CC} respectively list the analogous results obtained from the AG and OSGS formulations. These are obtained taking $\ell=0.1$, $c_{\u}=0.01$, and $c_p=0.6$. The significant role in the relief of the aforementioned pathology can easily be evidenced  from the correct values of the eigenvalues, and with the expected rates of convergence. 

Regarding the SG formulation, having arrived at the important conclusion that PS meshes should be used due to the potential risk of obtaining spurious eigenvalues otherwise, we present the results obtained from PS meshes when considering formulation \Eq{galerkin} in what follows. We tabulate the first 17 eigenvalues obtained using this formulation on PS meshes in Table \ref{tab:o1_p1_standard_PS} to numerically validate its convergence characteristics. 
 
\begin{table}[!h]
\centering
\small 
\caption{The first 17 eigenvalues on $\Omega_1$ using the SG formulation and $P_1$ elements on PS meshes.}  
\begin{tabular}{c|cccccc}
 \hline \hline  Exact  &    &       & Computed     &     &   
\tabularnewline		 & $N=5$   & $N=10$ & $N=15$  & $N=20$ & $N=25$
\tabularnewline  \hline  
1.0000 &  1.0029 &  1.0007 (2.0) &  1.0003 (2.0) &  1.0002 (2.0)&  1.0001 (2.0) \\
 1.0000 &  1.0072 &  1.0018 (2.0) &  1.0008 (2.0) &  1.0005 (2.0)&  1.0003 (2.0) \\
 2.0000 &  2.0197 &  2.0051 (2.0) &  2.0023 (2.0) &  2.0013 (2.0)&  2.0008 (2.0) \\
 4.0000 &  4.0792 &  4.0203 (2.0) &  4.0090 (2.0) &  4.0051 (2.0)&  4.0033 (2.0) \\
 4.0000 &  4.0796 &  4.0203 (2.0) &  4.0090 (2.0) &  4.0051 (2.0)&  4.0033 (2.0) \\
 5.0000 &  5.0772 &  5.0212 (1.9) &  5.0095 (2.0) &  5.0054 (2.0)&  5.0035 (2.0) \\
 5.0000 &  5.1596 &  5.0416 (1.9) &  5.0186 (2.0) &  5.0105 (2.0)&  5.0067 (2.0) \\
 8.0000 &  8.2651 &  8.0786 (1.8) &  8.0357 (1.9) &  8.0202 (2.0)&  8.0130 (2.0) \\
 9.0000 &  9.3628 &  9.0968 (1.9) &  9.0434 (2.0) &  9.0245 (2.0)&  9.0157 (2.0) \\
 9.0000 &  9.4040 &  9.1067 (1.9) &  9.0478 (2.0) &  9.0269 (2.0)&  9.0173 (2.0) \\
10.0000 & 10.4348 & 10.1242 (1.8) & 10.0560 (2.0) & 10.0317 (2.0)& 10.0203 (2.0) \\
10.0000 & 10.4494 & 10.1251 (1.8) & 10.0562 (2.0) & 10.0317 (2.0)& 10.0203 (2.0) \\
13.0000 & 13.4436 & 13.1522 (1.5) & 13.0699 (1.9) & 13.0397 (2.0)& 13.0255 (2.0) \\
13.0000 & 13.7494 & 13.2576 (1.5) & 13.1178 (1.9) & 13.0668 (2.0)& 13.0429 (2.0) \\
16.0000 & 17.0734 & 16.3173 (1.8) & 16.1433 (2.0) & 16.0810 (2.0)& 16.0520 (2.0) \\
16.0000 & 17.0912 & 16.3176 (1.8) & 16.1434 (2.0) & 16.0810 (2.0)& 16.0520 (2.0) \\
17.0000 & 17.9694 & 17.3329 (1.5) & 17.1518 (1.9) & 17.0860 (2.0)& 17.0552 (2.0) \\
 \hline
\end{tabular}
\label{tab:o1_p1_standard_PS}
\end{table} 

\begin{table}[!h]
\small
\caption{The first 10 eigenvalues on $\Omega_1$ using the OSGS formulation and $P_1$ elements on the uniform mesh shown in Figure~\ref{fig:sq_meshp1p2}.}  
\begin{center} 
\begin{tabular}{c|ccccc}
  \hline \hline   Exact  &    &    & Computed   &     &                  
		\tabularnewline  & $N=20$& $N=25$ & $N=30$ & $N=35$& $N=40$ 
\tabularnewline  \hline
 1.0000 &  1.0021 &  1.0013 (2.0) &  1.0009 (2.0) &  1.0007 (2.0)&  1.0005 (2.0) \\
 1.0000 &  1.0021 &  1.0013 (2.0) &  1.0009 (2.0) &  1.0007 (2.0)&  1.0005 (2.0) \\
 2.0000 &  2.0073 &  2.0044 (2.3) &  2.0030 (2.2) &  2.0021 (2.1)&  2.0016 (2.1) \\
 4.0000 &  4.0329 &  4.0210 (2.0) &  4.0146 (2.0) &  4.0107 (2.0)&  4.0082 (2.0) \\
 4.0000 &  4.0329 &  4.0211 (2.0) &  4.0146 (2.0) &  4.0107 (2.0)&  4.0082 (2.0) \\
 5.0000 &  5.0385 &  5.0239 (2.1) &  5.0164 (2.1) &  5.0119 (2.1)&  5.0091 (2.0) \\
 5.0000 &  5.0572 &  5.0351 (2.2) &  5.0239 (2.1) &  5.0173 (2.1)&  5.0131 (2.1) \\
 8.0000 &  8.1167 &  8.0706 (2.3) &  8.0475 (2.2) &  8.0342 (2.1)&  8.0258 (2.1) \\
 9.0000 &  9.1666 &  9.1065 (2.0) &  9.0739 (2.0) &  9.0543 (2.0)&  9.0416 (2.0) \\
 9.0000 &  9.1667 &  9.1066 (2.0) &  9.0740 (2.0) &  9.0543 (2.0)&  9.0416 (2.0) \\  
  \hline
\end{tabular}
\end{center}
\label{tab:o1_p1_osgs_uni}
\end{table}

\begin{table}[!h]
\small
\caption{The first 10 eigenvalues on $\Omega_1$ using the OSGS formulation and $P_2$ elements on the uniform mesh shown in Figure~\ref{fig:sq_meshp1p2}.} 
\begin{center} 
\begin{tabular}{c|ccccc}
 \hline \hline   Exact  &    &    & Computed   &     &                  
\tabularnewline  & $N=20$& $N=25$ & $N=30$ & $N=35$& $N=40$ 
\tabularnewline  \hline
 1.0000 &  1.0000 &  1.0000 (4.0) &  1.0000 (4.0) &  1.0000 (4.0)&  1.0000 (4.0) \\
 1.0000 &  1.0000 &  1.0000 (4.0) &  1.0000 (4.0) &  1.0000 (4.0)&  1.0000 (4.0) \\
 2.0000 &  2.0000 &  2.0000 (4.1) &  2.0000 (4.1) &  2.0000 (4.1)&  2.0000 (4.1) \\
 4.0000 &  4.0001 &  4.0000 (4.0) &  4.0000 (4.0) &  4.0000 (4.0)&  4.0000 (4.0) \\
 4.0000 &  4.0001 &  4.0000 (4.0) &  4.0000 (4.0) &  4.0000 (4.0)&  4.0000 (4.0) \\
 5.0000 &  5.0001 &  5.0000 (4.0) &  5.0000 (4.0) &  5.0000 (4.0)&  5.0000 (4.0) \\
 5.0000 &  5.0001 &  5.0000 (4.1) &  5.0000 (4.0) &  5.0000 (4.0)&  5.0000 (4.0) \\
 8.0000 &  8.0004 &  8.0002 (4.1) &  8.0001 (4.1) &  8.0000 (4.1)&  8.0000 (4.0) \\
 9.0000 &  9.0006 &  9.0002 (4.0) &  9.0001 (4.0) &  9.0001 (4.0)&  9.0000 (4.0) \\
 9.0000 &  9.0006 &  9.0002 (4.0) &  9.0001 (4.0) &  9.0001 (4.0)&  9.0000 (4.0) \\ 
  \hline
\end{tabular}
\end{center}
\label{tab:o1_p2_osgs_uni}
\end{table} 

Before closing this section, let us emphasize that both stabilized formulations produce exceptional results in the case of smooth solutions independent of the mesh types we have tested in this work. To further corroborate this, in Tables \ref{tab:o1_p1_osgs_uni} and \ref{tab:o1_p2_osgs_uni} we list the approximations to the first 10 eigenvalues using respectively $P_1$ and $P_2$ elements on standard uniform (right diagonal) meshes, a sample of which is shown in Figure \ref{fig:sq_meshp1p2}. These results are obtained from the OSGS formulation; however, they are very similar to the ones obtained from the AG formulation, which are not included for brevity. We can easily infer from these results that the stabilization achieves a double order of convergence without any spurious value, as it is anticipated from the theory. 

\subsection{The L-shaped domain}
\label{subsec:lsh}

In this subsection we want to examine a widely considered test case, e.g., in \cite{dauge2003,buffa2009,Bonito2011,du2020}, that is an L-shaped domain with a re-entrant corner, defined by $\Omega_2= \left] -1, 1\right[^2 \setminus \{[0,1]\times [-1,0]$\}. The CC and PS discretizations of this domain with $N=5$ are shown in Figure~\ref{fig:LmeshCCPS}. All of the reported values computed by a stabilized formulation concerning this domain are obtained using $\ell=0.3$, $c_{\u}=0.85$, and $c_p=0.5$. In the numerical results, we use the reference values taken from \cite{dauge2003} for comparison.

\begin{figure}[!h]\setlength{\unitlength}{1cm}
\begin{center}
\includegraphics[width=4.6cm, height=4.6cm]{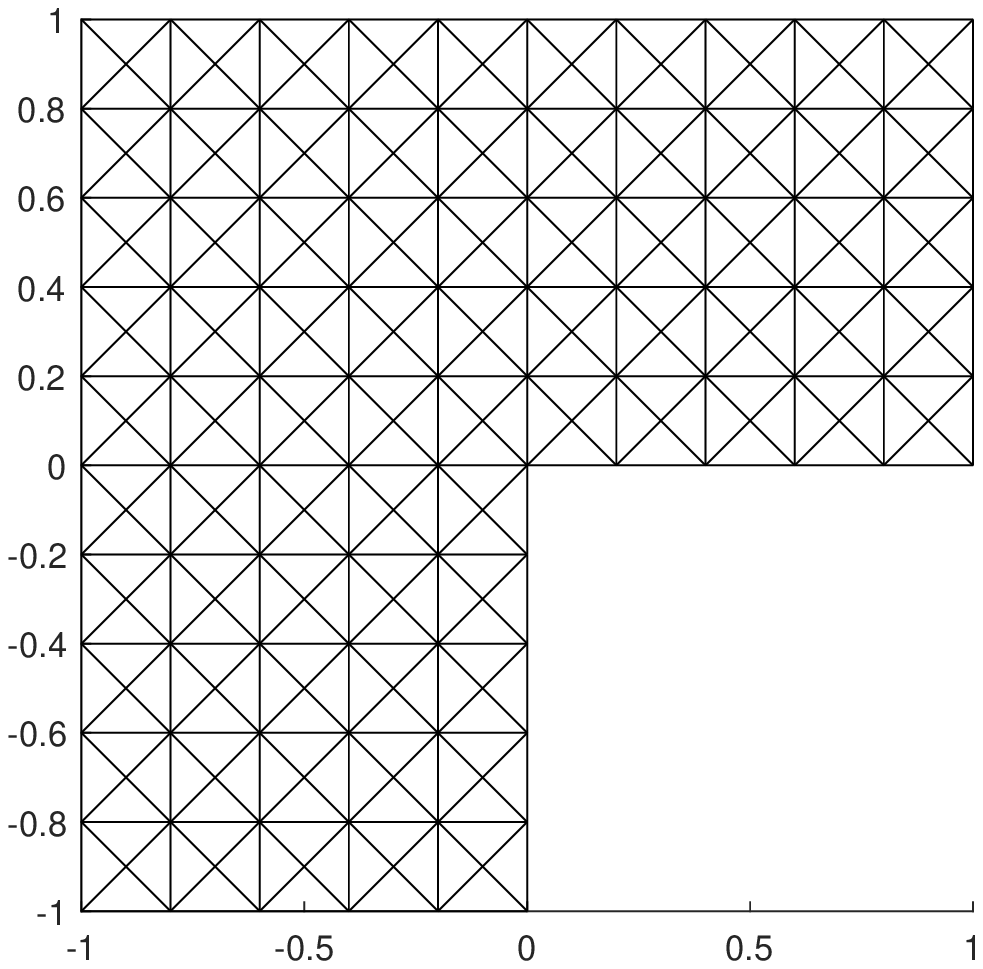}\hspace{1.5cm}
\includegraphics[width=4.6cm, height=4.6cm]{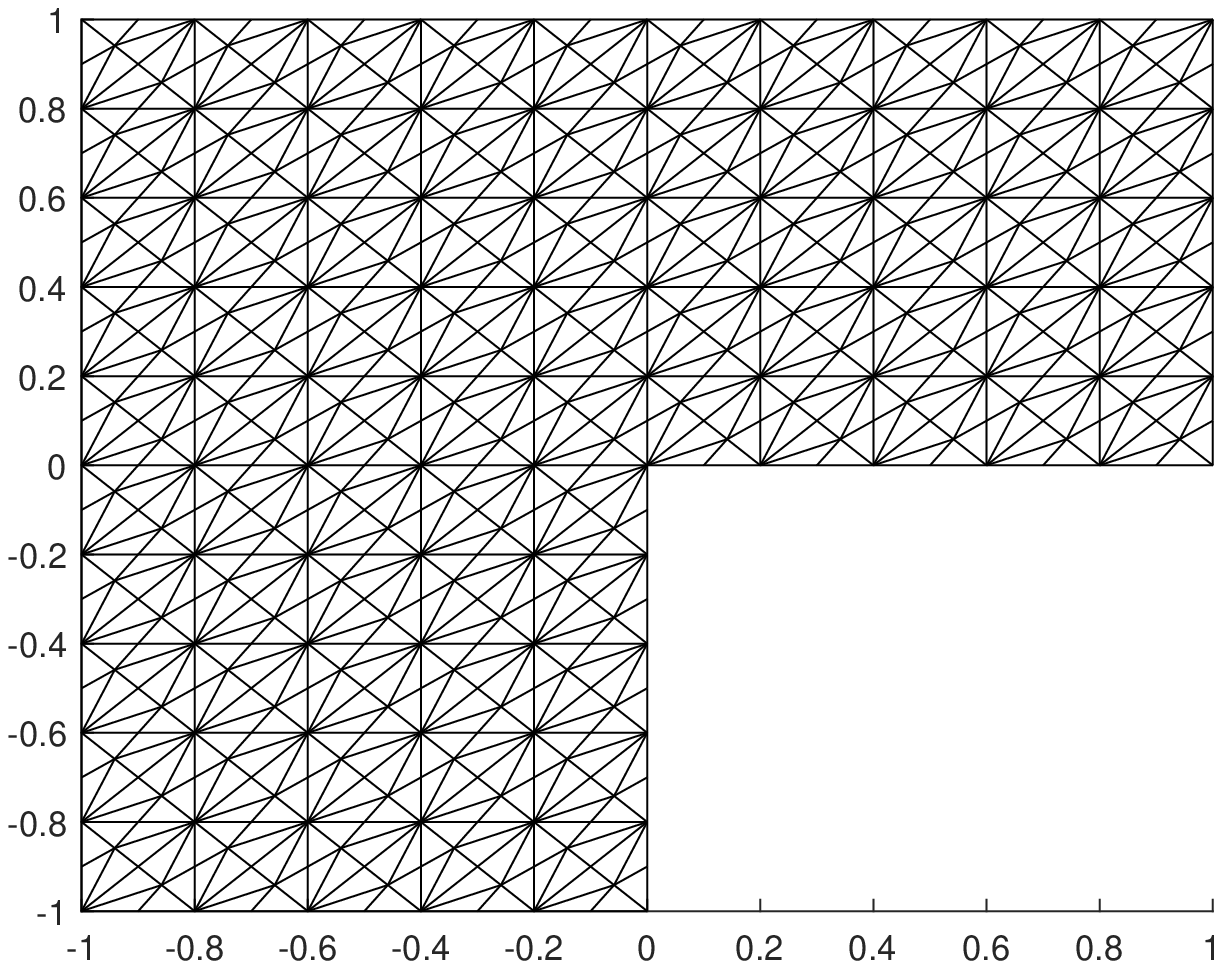}
\end{center}
\caption{A sample triangulation of  the L-shaped domain with CC (left) and PS (right) mesh, where $N=5$.}
\label{fig:LmeshCCPS}
\end{figure}

It is known that the first eigenvalue is the most critical when an approximation is considered, as it corresponds to the eigenfunction that has the lowest regularity, being it in $(H^{2/3-\epsilon}(\Omega))^2$ for any $\epsilon > 0$. When nodal elements are used, the existence of such a singularity manifests itself in the drastic change of the results depending on the way the normal vector is treated at the re-entrant corner in the process of boundary condition imposition. To realize and examine this computationally, we have tried three alternative ways of handling the components of the unknown vector field at the re-entrant corner of the enclosure; this is an issue in the case of nodal-based formulations. The first two alternatives are to force both of the components to vanish at the corner, or to leave them free at that node. The third one consists of assigning a fictitious normal vector due to the geometrical convenience. Specifically, this last strategy depends on assuming that the normal is the bisector of this corner, and imposing the boundary condition ${\bm n}  \times \u_h= \bm {0}$ in the following way. The tangent component of $\u$ is 0, and $\u$ has to follow the normal ${\bm n}$. In this way, if we write $\u=[u_1,u_2]^\mathsf{T}$, we impose the condition $u_2=-u_1$, and solve for $u_1$. 

We present the corresponding results obtained using PS meshes with the SG formulation in Table \ref{tab:rcorner_standPS} and the OSGS formulation in Table \ref{tab:rcorner_osgsPS}. These results clearly show that alternative strategies for enforcing the boundary condition at the re-entrant corner of the enclosure result in significantly different approximations in the first eigenvalue. The most remarkable deduction emanating from these tables is the influence of the treatment of the re-entrant corner with or without stabilization. It can be easily seen that leaving the components free at the corner lead to accurate results in the case of stabilization while it is not convenient at all for the SG formulation. The bisector normal strategy seems to work well for all of the formulations. Let us note here that despite of this difference, we have observed that the convergence properties are very similar. It is also of significant importance to note once again that the results obtained from the SG formulation seem to be more accurate than the ones obtained from the stabilized formulations. On the other hand, it is always possible to increase the accuracy in the latter schemes by manipulating the stabilization parameters, even though this has not been the main aim of this research. 

\begin{table}[!h]
\caption{The first 10 nonzero eigenvalues on $\Omega_2$ using PS mesh and the SG formulation, $N=9$.}  
\begin{center} 
\small
\begin{tabular}{ccc}
 \hline \hline   bisector normal  & $u_1=u_2=0$   &    $u_1,u_2$ free  
\tabularnewline  \hline 
   1.4876  &  1.4435   &     0.1181   \\
     3.5348 &   3.5348  &   1.4876\\
    9.8829 &  9.8829   &   3.5351\\
      9.8873 &     9.8873  &   9.8829\\
    11.4032 &    11.4032  &  10.3660 \\
    12.6424 &  12.4952   &   11.4038\\
     19.8006 &    19.8006  &  12.6424 \\
    21.5789 &    21.2304  &    20.3093\\
     23.4182  &      23.4182 &   21.5789\\
     28.7328 &   28.3622  &    23.4234\\
 \hline
\end{tabular}
\end{center}
\label{tab:rcorner_standPS}
\end{table} 

\begin{table}[!h]
\caption{The first 10 nonzero eigenvalues on $\Omega_2$ using PS mesh and the OSGS formulation, $N=9$.}  
\begin{center} 
\scriptsize
\begin{tabular}{ccc}
 \hline \hline   bisector normal  & $u_1=u_2=0$  & $u_1,u_2$ free
\tabularnewline  \hline 
  1.6252  &    2.2203   &   1.6252     \\
	3.5517&  3.5517  &   3.5371     \\
   9.8836 &   9.8836  &     9.7725    \\
	  9.8879	&  9.8879   &    9.8836      \\
  11.4116  & 11.4116   &     11.4058     \\
	12.6789	&   12.8182  &    12.6789     \\
    19.8034 &  19.8034  &    19.4922      \\
	  21.5959	&   21.6729  &    21.5959     \\
    23.4449 &  23.4449  &    23.4287     \\
	 28.5389	&    28.5389  &     28.5380    \\			
 \hline
\end{tabular}
\end{center}
\label{tab:rcorner_osgsPS} 
\end{table} 

\begin{table}[!h]
\centering
\small 
\caption{The first 5 eigenvalues on $\Omega_2$ using the SG formulation and $P_1$ elements on PS meshes.}  
\begin{tabular}{c|cccccc}
 \hline \hline  Ref.  &    &       & Computed     &     &   
\tabularnewline		 & $N=5$   & $N=10$ & $N=15$  & $N=20$ & $N=25$
\tabularnewline  \hline  
1.4756 &  1.5024 &  1.4860 (1.4) &  1.4816 (1.4) &  1.4797 (1.4)&  1.4786 (1.3) \\
 3.5340 &  3.5351 &  3.5347 (0.7) &  3.5344 (1.5) &  3.5342 (1.7)&  3.5342 (1.7) \\
 9.8696 &  9.9124 &  9.8804 (2.0) &  9.8744 (2.0) &  9.8723 (2.0)&  9.8713 (2.0) \\
 9.8696 &  9.9267 &  9.8839 (2.0) &  9.8760 (2.0) &  9.8732 (2.0)&  9.8719 (2.0) \\
11.3895 & 11.4314 & 11.4007 (1.9) & 11.3946 (1.9) & 11.3924 (2.0)& 11.3913 (2.0) \\
  \hline
\end{tabular}
\label{tab:o2_p1_st_PS}
\end{table} 


\begin{table}[!h]
\centering
\small 
\caption{The first 5 eigenvalues on $\Omega_2$ using the AG formulation and $P_1$ elements on PS meshes.}  
\begin{tabular}{c|cccccc}
 \hline \hline  Ref.  &    &       & Computed     &     &   
\tabularnewline		 & $N=5$   & $N=10$ & $N=15$  & $N=20$ & $N=25$
\tabularnewline  \hline  
 1.4756 &  1.9220 &  1.6790 (1.1) &  1.5981 (1.2) &  1.5603 (1.3)&  1.5389 (1.3) \\
 3.5340 &  3.6020 &  3.5467 (2.4) &  3.5386 (2.5) &  3.5362 (2.5)&  3.5353 (2.5) \\
 9.8696 &  9.9197 &  9.8808 (2.2) &  9.8745 (2.1) &  9.8723 (2.0)&  9.8713 (2.0) \\
 9.8696 &  9.9335 &  9.8844 (2.1) &  9.8761 (2.0) &  9.8732 (2.0)&  9.8719 (2.0) \\
11.3895 & 11.4729 & 11.4066 (2.3) & 11.3965 (2.2) & 11.3933 (2.2)& 11.3918 (2.1) \\
 \hline
\end{tabular}
\label{tab:o2_p1_ag_PS}
\end{table} 

In the light of these investigations, the approximations for this domain case using PS meshes and following the bisector normal strategy are listed in Tables \ref{tab:o2_p1_st_PS}, \ref{tab:o2_p1_ag_PS}, and \ref{tab:o2_p1_osgs_PS} for the SG, AG, and OSGS formulations, respectively. From these tables we can see that the convergences rates are the ones expected from the theory.  Specifically, there are no spurious values encountered in any of the formulations considered. The smallest rate of convergence is observed for the first eigenvalue, whose corresponding eigenfunction has the lowest regularity. Moreover, the OSGS formulation seems to be more accurate in comparison with the AG formulation when implemented using the same set of stabilization parameters. 

\begin{table}[!h]
\centering
\small 
\caption{The first 5 eigenvalues on $\Omega_2$ using the OSGS formulation and $P_1$ elements on PS meshes.}  
\begin{tabular}{c|cccccc}
 \hline \hline  Ref.  &    &       & Computed     &     &   
\tabularnewline		 & $N=5$   & $N=10$ & $N=15$  & $N=20$ & $N=25$
\tabularnewline  \hline  
 1.4756 &  1.7762 &  1.6068 (1.2) &  1.5538 (1.3) &  1.5294 (1.3)&  1.5157 (1.3) \\
 3.5340 &  3.6057 &  3.5476 (2.4) &  3.5389 (2.5) &  3.5364 (2.5)&  3.5354 (2.5) \\
 9.8696 &  9.9191 &  9.8808 (2.1) &  9.8745 (2.1) &  9.8723 (2.0)&  9.8713 (2.0) \\
 9.8696 &  9.9326 &  9.8843 (2.1) &  9.8761 (2.0) &  9.8732 (2.0)&  9.8719 (2.0) \\
11.3895 & 11.4732 & 11.4070 (2.3) & 11.3967 (2.2) & 11.3933 (2.2)& 11.3919 (2.1) \\
 \hline
\end{tabular}
\label{tab:o2_p1_osgs_PS}
\end{table} 

\begin{table}[!h]
\centering
\small 
\caption{The first 5 eigenvalues on $\Omega_2$ using the OSGS formulation and $P_1$ elements on CC meshes.}  
\begin{tabular}{c|cccccc}
 \hline \hline  Ref.  &    &       & Computed     &     &   
\tabularnewline		 & $N=5$   & $N=10$ & $N=15$  & $N=20$ & $N=25$
\tabularnewline  \hline  
 1.4756 & 1.6350 & 1.5391 (1.3) & 1.5126 (1.3) & 1.5008 (1.3)  &1.4943 (1.3)  \\
3.5340 & 3.6248 & 3.5520 (2.3) & 3.5407 (2.4) & 3.5374 (2.4)  &3.5360 (2.4)   \\
9.8696 & 10.0023 & 9.8985 (2.2) & 9.8820 (2.1) & 9.8765 (2.0) & 9.8740 (2.0)  \\
9.8696 & 10.0032 & 9.8985 (2.2) & 9.8820 (2.1) & 9.8765 (2.0)  &9.8740 (2.0)   \\
11.3895 & 11.5771 & 11.4316 (2.2) & 11.4073 (2.1) & 11.3993 (2.1) & 11.3957 (2.1)   \\  
 \hline
\end{tabular}
\label{tab:L_cc_p1}
\end{table} 

The plots of the components of the fundamental eigenfunction, that is associated with the minimum eigenvalue, obtained using the OSGS formulation on the PS mesh, are presented in Figure \ref{fig:o2_minvector_os_p1_PS}. The approximations that are obtained from the bisector normal strategy show well the singularity near the re-entrant corner, corroborating the pattern of the eigenfunction that can be expected.

\begin{figure}[!h]\setlength{\unitlength}{1cm}
\begin{center}
\includegraphics[width=7.2cm, height=7.2cm]{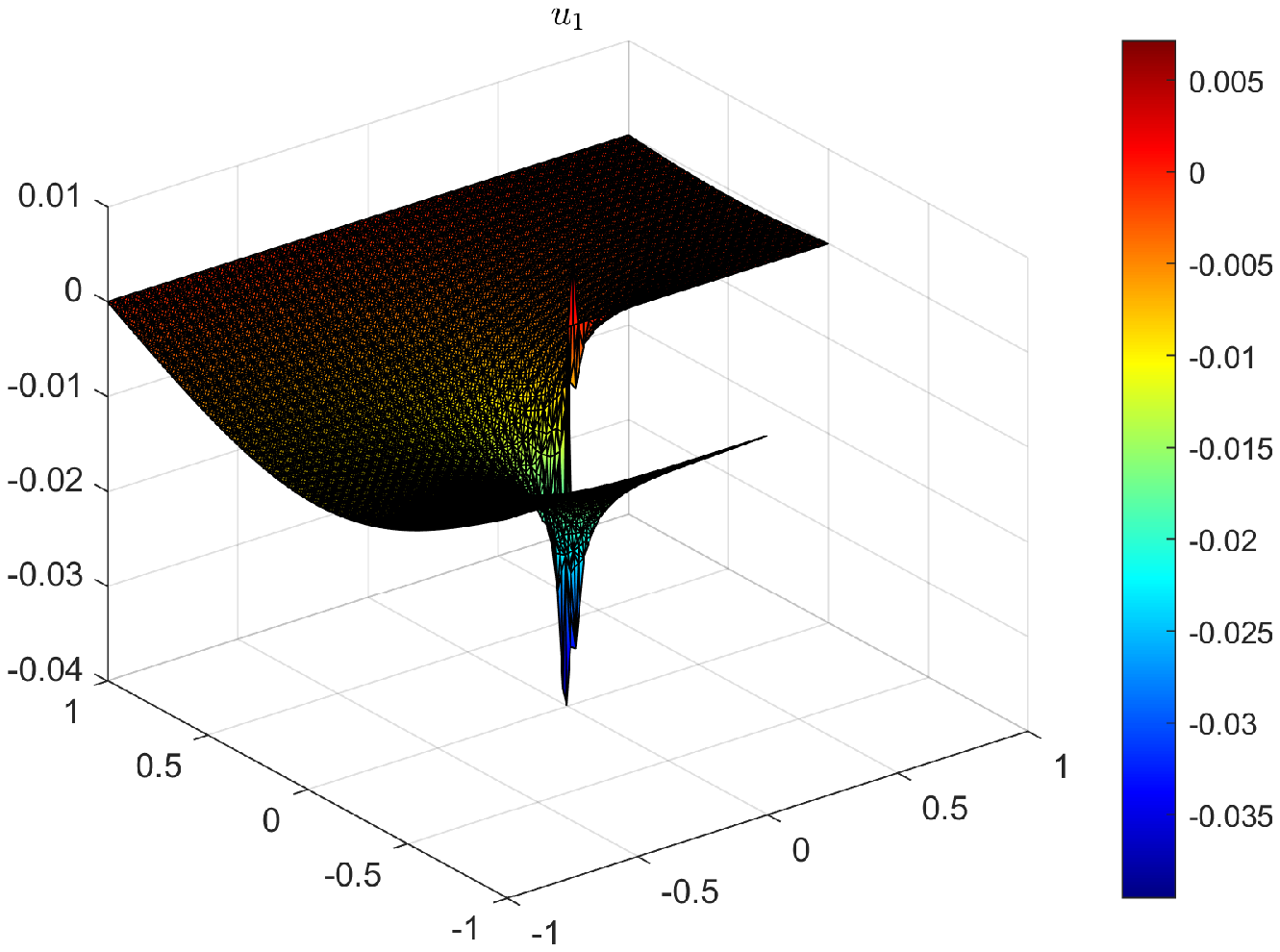}\hspace{1.0cm}
\includegraphics[width=7.2cm, height=7.2cm]{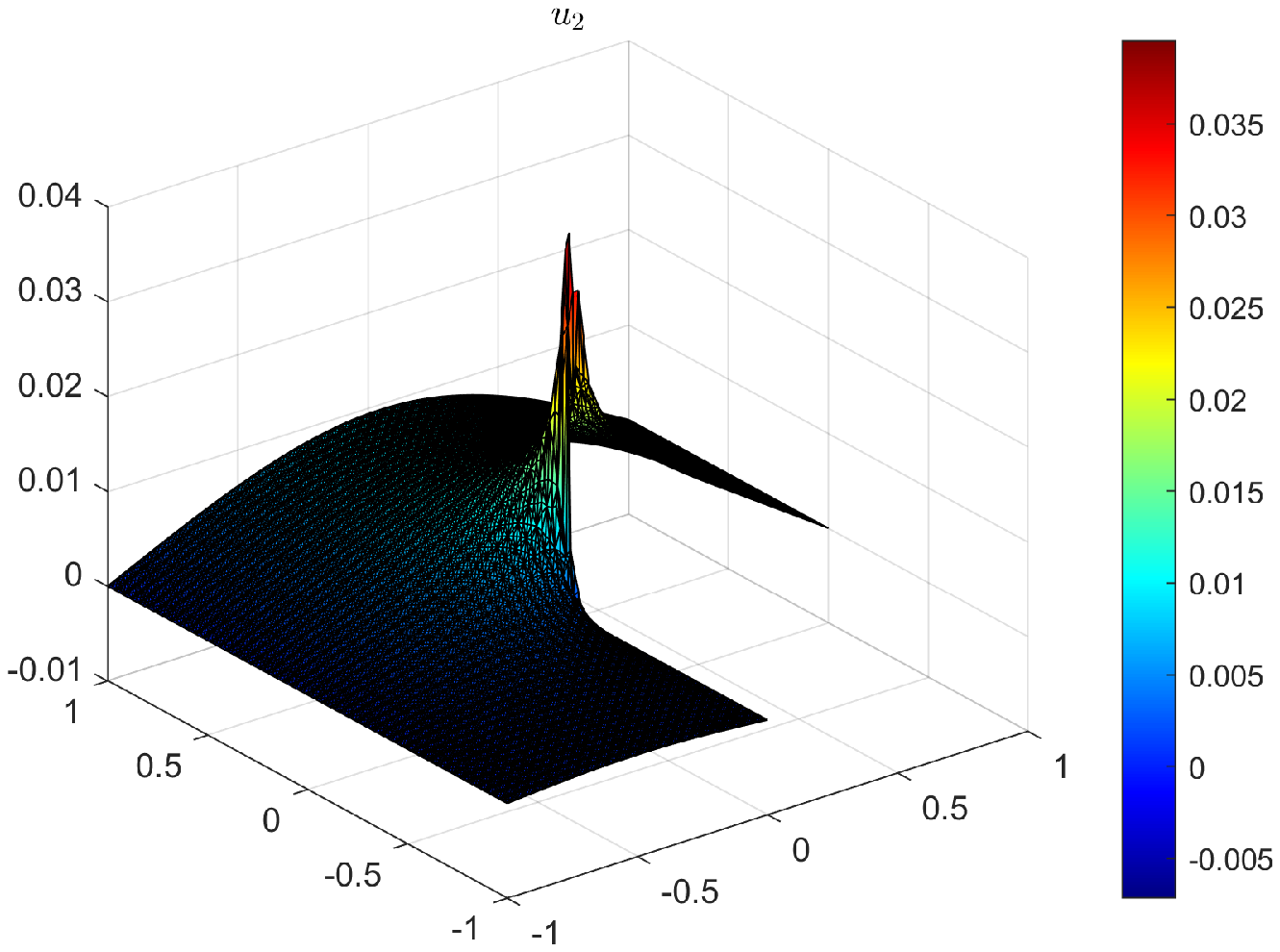}
\end{center}
\caption{The components of the fundamental eigenfunction on $\Omega_2$.}
\label{fig:o2_minvector_os_p1_PS}
\end{figure}

As a last illustration for this problem domain, we approximate the first 5 eigenvalues using the OSGS stabilized formulation on CC meshes. We list the results in Tables \ref{tab:L_cc_p1}  and  \ref{tab:L_cc_p2} for $P_1$ and $P_2$ elements, respectively. All the eigenvalues listed are approximated correctly, and the convergence rates are as expected. Note the unchanged rate in the first value with an increase in the order of interpolations due to the low regularity of the corresponding eigenfunction. Nevertheless, the accuracy is significantly improved for all the approximated values when quadratic interpolations are used instead of the linear ones.  The results mentioned here are in analogy with the ones obtained when the AG formulation is used in the simulations, and we prefer not to include them for conciseness of the presentation.    

\begin{table}[!h]
\centering
\small 
\caption{The first 5 eigenvalues on $\Omega_2$ using the OSGS formulation and $P_2$ elements on CC meshes.}  
\begin{tabular}{c|cccccc}
 \hline \hline  Ref.  &    &       & Computed     &     &   
\tabularnewline		 & $N=5$   & $N=10$ & $N=15$  & $N=20$ & $N=25$
\tabularnewline  \hline  
 1.4756 & 1.5446 & 1.5046 (1.3) & 1.4927 (1.3) & 1.4873 (1.3)& 1.4843 (1.3) \\
3.5340 & 3.5602 & 3.5388 (2.4) & 3.5357 (2.6) & 3.5348 (2.6)& 3.5345 (2.6) \\
9.8696 & 9.8701 & 9.8696 (4.1) & 9.8696 (4.0) & 9.8696 (4.0)& 9.8696 (4.0) \\
9.8696 & 9.8701 & 9.8696 (4.1) & 9.8696 (4.0) & 9.8696 (4.0)& 9.8696 (4.0) \\
11.3895 & 11.4010 & 11.3915 (2.5) & 11.3902 (2.6) & 11.3898 (2.6)& 11.3897 (2.7) \\ 
 \hline
\end{tabular}
\label{tab:L_cc_p2}
\end{table}

\subsection{The cracked square domain}
\label{subsec:crs}

As a final test case, we consider a square domain with a crack defined as $\Omega_3= \left] -1, 1\right[^2 \setminus \{(x,y) \in \mathbb{R} : 0 \leq x <1, y=0\}$. Sample discretizations of $\Omega_3$ are depicted in Figure \ref{fig:cs_meshCCPS}. In the sequel we report the results of our numerical simulations obtained by, unless it is otherwise stated, using linear interpolations on PS meshes, and taking $\ell=0.2$, $c_{\u}=0.1$, and $c_p=1.0$ for the stabilized formulations. We report also some results obtained by considering CC grids or quadratic interpolations on PS meshes in the sequel. As in the previous case, the reference eigenvalues are taken from \cite{dauge2003}. 

\begin{figure}[!h]\setlength{\unitlength}{1cm}
\begin{center}
\includegraphics[width=4.6cm, height=4.6cm]{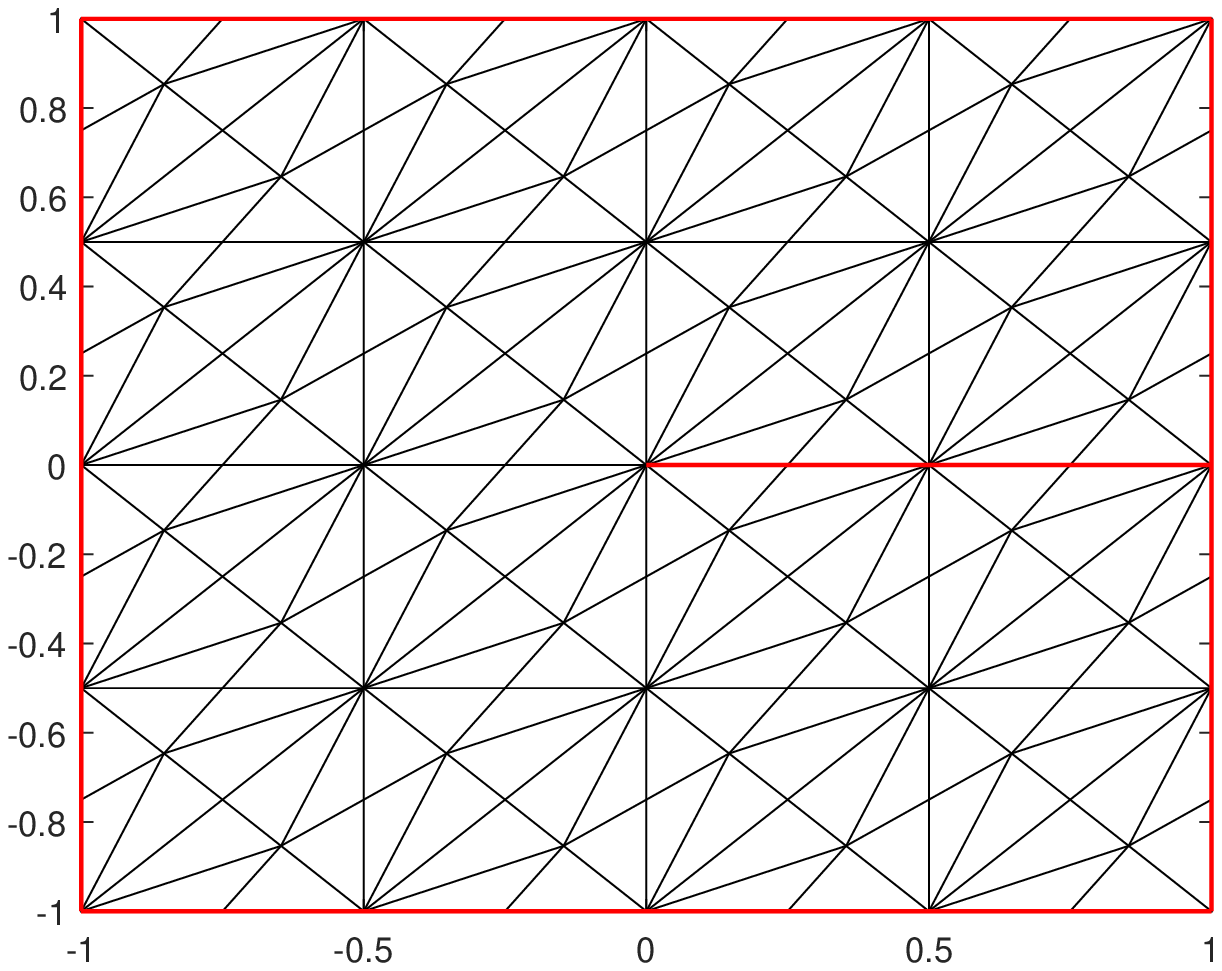}\hspace{1.5cm}
\includegraphics[width=4.6cm, height=4.6cm]{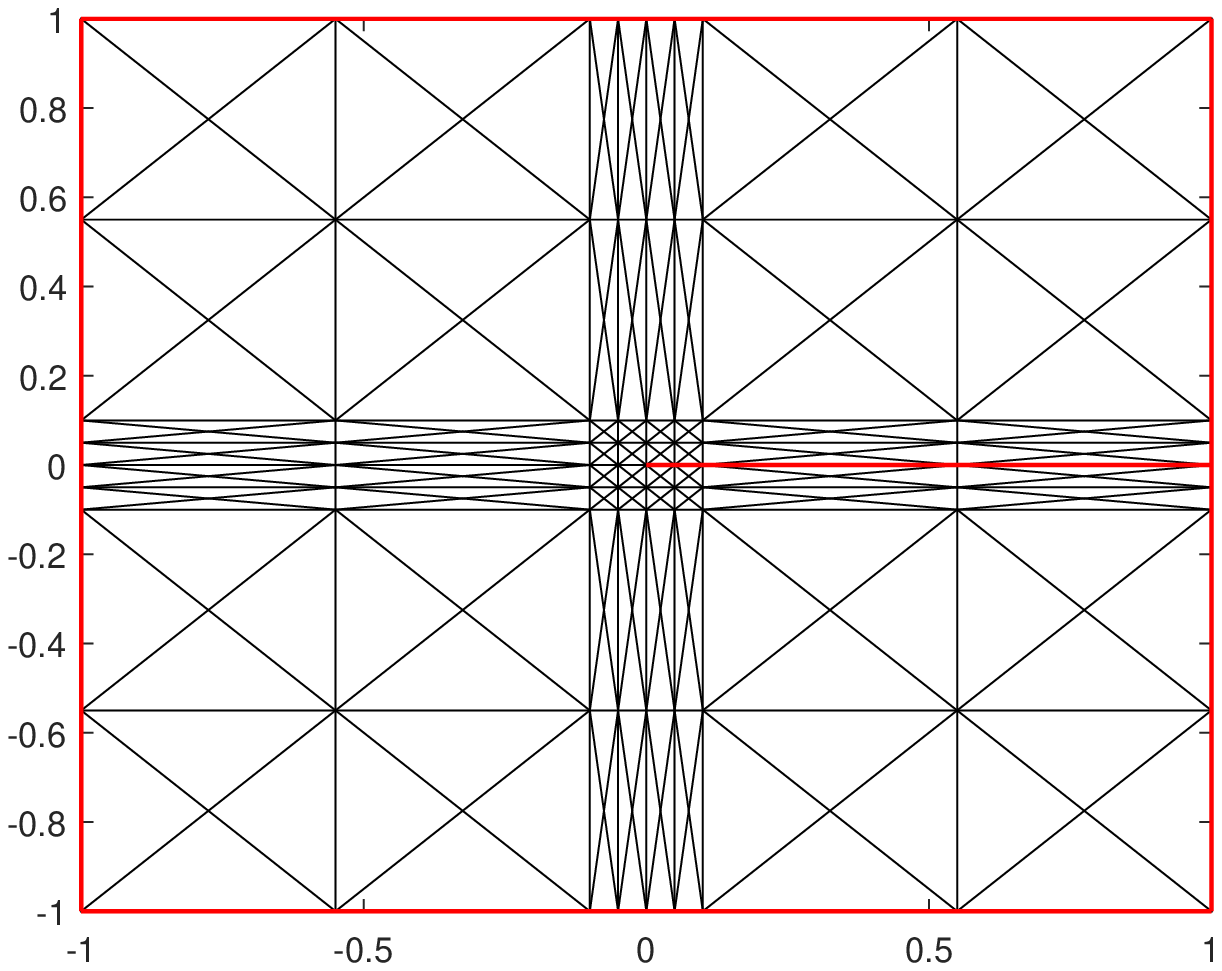}
\end{center}
\caption{Sample triangulations of the domain with a crack, $\Omega_3$, with PS mesh where $N=4$ (left) and internal layer CC mesh when $N=8$ (right). The boundaries are shown in red.}
\label{fig:cs_meshCCPS}
\end{figure}

Additionally to regular solutions, the EVP in this domain also has solutions that are unbounded near the tip of the slit, exhibiting a strong singularity. In particular, the smallest eigenvalue becomes the most crucial one to approximate, as its corresponding eigenfunction belongs to $(H^{1/2-\epsilon}(\Omega))^2$ for any $\epsilon > 0$ (see Figure \ref{fig:o3_minvector_os_p1_PS}). The same discussion about the treatment of the re-entrant corner as in the L-shape domain case applies to the tip of the crack for this problem, although without a clear identification of a fictitious normal in the present case. For the present instance, we examine the influence of the treatment of enforcing the boundary condition on the approximations by leaving the components of $\u$ as free or forcing them to vanish at the tip. We tabulate the corresponding results in Tables \ref{tab:o3_stand_PS_ufree} and \ref{tab:o3_osgs_PS_ufree} for the former, and Tables \ref{tab:o3_stand_PS_uzero} and \ref{tab:o3_osgs_PS_uzero} for the latter strategy. Tables \ref{tab:o3_stand_PS_ufree} and \ref{tab:o3_stand_PS_uzero} list the results of the SG formulation, whereas Tables \ref{tab:o3_osgs_PS_ufree} and \ref{tab:o3_osgs_PS_uzero} list the ones obtained from the OSGS stabilization. We can easily infer from these tables that the results are more accurate with higher convergence rates when the components of $\u_h$ are left to be free in comparison with the case where they are forced to vanish. The difference is very significant in the OSGS formulation, especially in the first eigenvalue, which is approximated with lowest accuracy. 

\begin{table}[!h] 
\small
\caption{The first 10 eigenvalues on $\Omega_3$, SG formulation, $u_1,u_2$ are free at the tip.}  
\begin{center} 
\begin{tabular}{c|cccccc}
 \hline \hline  Ref.  &    &       & Computed     &     &   
\tabularnewline		 & $N=2$   & $N=4$ & $N=8$  & $N=16$ & $N=32$
\tabularnewline  \hline  
1.0341 &  2.5316 &  2.4804 (0.1) &  2.4723 (0.0) &  2.2958 (0.5)&  2.0894 (0.8) \\
 2.4674 &  4.3066 &  3.3216 (1.1) &  2.6162 (4.3) &  2.4699 (14.2)&  2.4689 (2.3) \\
 4.0469 &  4.9140 &  4.0973 (4.1) &  4.0654 (2.5) &  4.0563 (2.4)&  4.0525 (2.3) \\
 9.8696 & 11.2261 & 10.1299 (2.4) &  9.9673 (2.4) &  9.9193 (2.3)&  9.8995 (2.3) \\
 9.8696 & 11.2416 & 10.1301 (2.4) &  9.9674 (2.4) &  9.9194 (2.3)&  9.8995 (2.3) \\
10.8449 & 12.1956 & 11.1009 (2.4) & 10.9444 (2.3) & 10.8966 (2.3)& 10.8763 (2.2) \\
12.2649 & 13.6625 & 12.5979 (2.1) & 12.4445 (1.5) & 12.3944 (1.1)& 12.3726 (0.8) \\
12.3370 & 15.0035 & 13.3779 (1.4) & 12.9190 (1.4) & 12.7404 (1.3)& 12.6421 (1.3) \\
19.7392 & 22.6499 & 20.3592 (2.2) & 20.0052 (2.1) & 19.8851 (2.1)& 19.8312 (2.1) \\
21.2441 & 26.0026 & 22.9060 (1.5) & 22.0858 (1.7) & 21.8061 (1.4)& 21.6679 (1.3) \\
\hline
\end{tabular}
\end{center}
\label{tab:o3_stand_PS_ufree}
\end{table} 

\begin{table}[!h] 
\small
\caption{The first 10 eigenvalues on $\Omega_3$, OSGS stabilization, $u_1,u_2$ free at the tip.}  
\begin{center} 
\begin{tabular}{c|cccccc}
 \hline \hline  Ref.  &    &       & Computed     &     &   
\tabularnewline		 & $N=2$   & $N=4$ & $N=8$  & $N=16$ & $N=32$
\tabularnewline  \hline   
 1.0341 &  2.6628 &  2.4998 (0.2) &  2.4729 (0.0) &  1.8854 (0.8)&  1.4921 (0.9) \\
 2.4674 &  3.9804 &  3.1997 (1.0) &  2.4807 (5.8) &  2.4687 (3.4)&  2.4677 (2.1) \\
 4.0469 &  6.0123 &  4.2908 (3.0) &  4.0858 (2.7) &  4.0536 (2.5)&  4.0482 (2.4) \\
 9.8696 & 15.2294 & 10.3782 (3.4) &  9.9642 (2.4) &  9.8904 (2.2)&  9.8746 (2.1) \\
 9.8696 & 16.1181 & 10.3849 (3.6) &  9.9647 (2.4) &  9.8904 (2.2)&  9.8746 (2.1) \\
10.8449 & 16.3441 & 11.4912 (3.1) & 10.9694 (2.4) & 10.8714 (2.2)& 10.8511 (2.1) \\
12.2649 & 17.9249 & 13.0336 (2.9) & 12.4783 (1.8) & 12.3689 (1.0)& 12.3447 (0.4) \\
12.3370 & 19.2569 & 13.8721 (2.2) & 12.8570 (1.6) & 12.5328 (1.4)& 12.3928 (1.8) \\
19.7392 & 24.5106 & 21.3360 (1.6) & 20.0952 (2.2) & 19.8212 (2.1)& 19.7591 (2.0) \\
21.2441 & 29.7499 & 22.0312 (3.4) & 21.7211 (0.7) & 21.4149 (1.5)& 21.3097 (1.4) \\
\hline
\end{tabular}
\end{center}
\label{tab:o3_osgs_PS_ufree}
\end{table} 
\begin{table}[!h]
\small
\caption{The first 10 eigenvalues on $\Omega_3$, SG formulation, $u_1=u_2=0$ at the tip.}  
\begin{center} 
\begin{tabular}{c|cccccc}
 \hline \hline  Ref.  &    &       & Computed     &     &   
\tabularnewline		 & $N=2$   & $N=4$ & $N=8$  & $N=16$ & $N=32$
\tabularnewline  \hline   
1.0341 &  0.5429 &  0.7464 (0.8) &  0.8771 (0.9) &  0.9519 (0.9)&  0.9920 (1.0) \\
 2.4674 &  2.0383 &  2.3690 (2.1) &  2.4438 (2.1) &  2.4616 (2.0)&  2.4660 (2.0) \\
 4.0469 &  4.0133 &  4.0807 (-0.0) &  4.0581 (1.6) &  4.0500 (1.9)&  4.0477 (1.9) \\
 9.8696 & 10.3409 & 10.1588 (0.7) &  9.9470 (1.9) &  9.8892 (2.0)&  9.8745 (2.0) \\
 9.8696 & 10.5393 & 10.1701 (1.2) &  9.9475 (1.9) &  9.8892 (2.0)&  9.8745 (2.0) \\
10.8449 & 10.7668 & 11.1508 (-2.0) & 10.9348 (1.8) & 10.8681 (2.0)& 10.8507 (2.0) \\
12.2649 & 11.0607 & 11.2899 (0.3) & 11.7001 (0.8) & 11.9595 (0.9)& 12.1060 (0.9) \\
12.3370 & 11.3086 & 12.4285 (3.5) & 12.3954 (0.6) & 12.3534 (1.8)& 12.3412 (2.0) \\
19.7392 & 14.0332 & 20.1339 (3.9) & 20.0343 (0.4) & 19.8167 (1.9)& 19.7588 (2.0) \\
21.2441 & 14.6242 & 20.5661 (3.3) & 20.4182 (-0.3) & 20.7411 (0.7)& 20.9697 (0.9) \\
\hline
\end{tabular}
\end{center}
\label{tab:o3_stand_PS_uzero}
\end{table} 
\begin{table}[!h]
\small
\caption{The first 10 eigenvalues on $\Omega_3$, OSGS stabilization, $u_1=u_2=0$ at the tip.}  
\begin{center} 
\begin{tabular}{c|cccccc}
 \hline \hline  Ref.  &    &       & Computed     &     &   
\tabularnewline		 & $N=2$   & $N=4$ & $N=8$  & $N=16$ & $N=32$
\tabularnewline  \hline   
 1.0341 &  5.7091 &  4.1253 (0.6) &  3.1774 (0.5) &  2.7209 (0.3)&  2.4921 (0.2) \\
 2.4674 &  7.3860 &  4.3581 (1.4) &  4.1048 (0.2) &  3.4704 (0.7)&  2.5474 (3.6) \\
 4.0469 & 10.2714 &  5.9888 (1.7) &  4.6276 (1.7) &  4.0582 (5.7)&  4.0488 (2.6) \\
 9.8696 & 17.5589 & 10.4587 (3.7) &  9.9724 (2.5) &  9.8910 (2.3)&  9.8746 (2.1) \\
 9.8696 & 18.2007 & 10.4646 (3.8) &  9.9729 (2.5) &  9.8910 (2.3)&  9.8746 (2.1) \\
10.8449 & 19.3851 & 11.6565 (3.4) & 10.9942 (2.4) & 10.8748 (2.3)& 10.8515 (2.2) \\
12.2649 & 23.7984 & 13.9401 (2.8) & 12.7777 (1.7) & 12.4642 (1.4)& 12.3714 (0.9) \\
12.3370 & 23.9740 & 15.4407 (1.9) & 13.7123 (1.2) & 13.0390 (1.0)& 12.6666 (1.1) \\
19.7392 & 34.1972 & 21.5708 (3.0) & 20.1193 (2.3) & 19.8234 (2.2)& 19.7592 (2.1) \\
21.2441 & 37.2805 & 24.2568 (2.4) & 22.5214 (1.2) & 21.8447 (1.1)& 21.5357 (1.0) \\
\hline
\end{tabular}
\end{center}
\label{tab:o3_osgs_PS_uzero}
\end{table} 

We have observed that all the resulting eigenfunctions related to the values we present are in physically meaningful agreement with the theoretical expectations, with an absence of any spurious mode in a checkerboard pattern. In Figure \ref{fig:o3_minvector_os_p1_PS}, we plot the components of the fundamental eigenfunction computed using the OSGS formulation when $N=32$. The components are left free at the tip of the crack (see Table \ref{tab:o3_osgs_PS_ufree}) in order to illustrate the singular behavior of the solution vector near it.  

\begin{figure}[!h]\setlength{\unitlength}{1cm}
\begin{center}
\includegraphics[width=7.2cm, height=7.2cm]{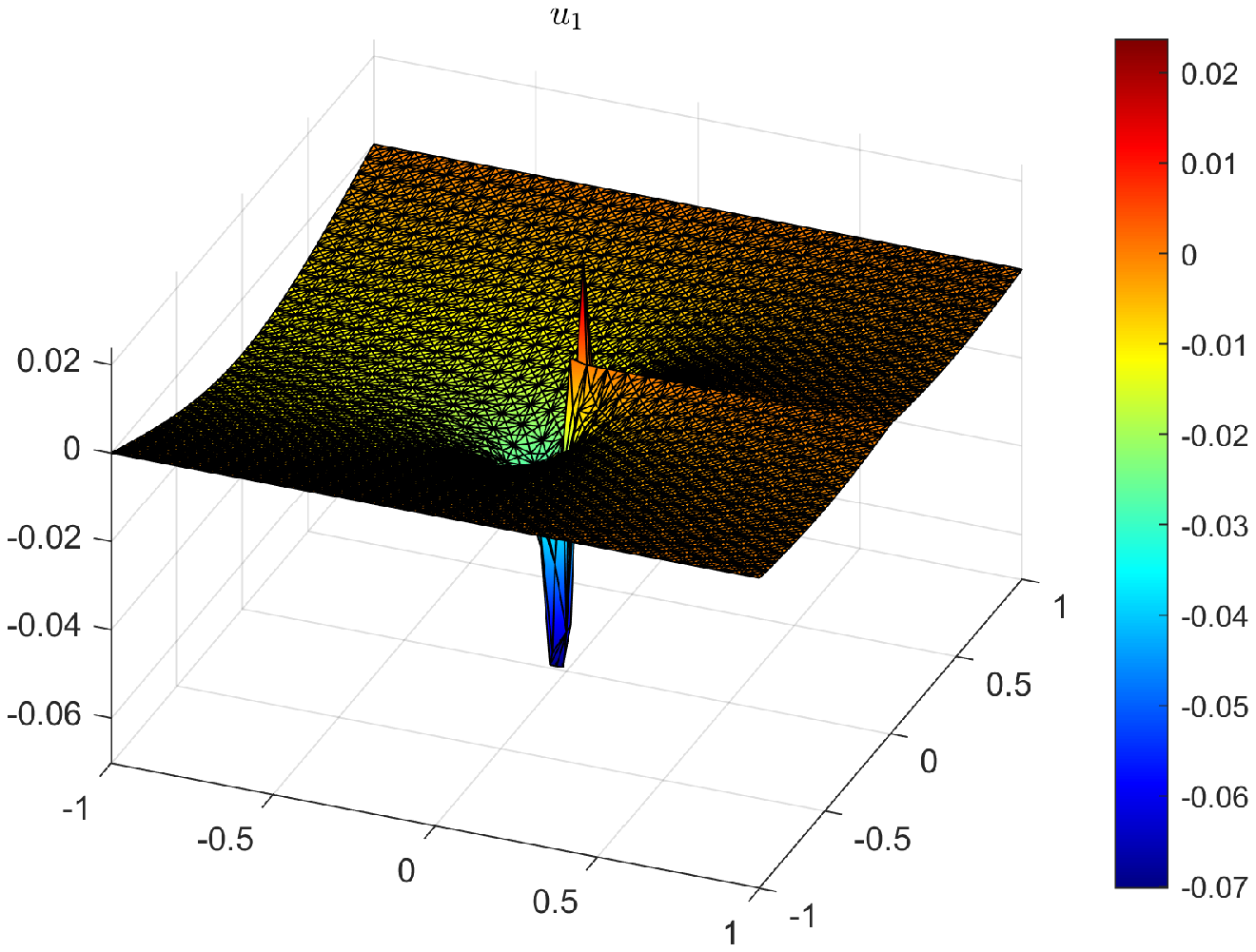}\hspace{1.0cm}
\includegraphics[width=7.2cm, height=7.2cm]{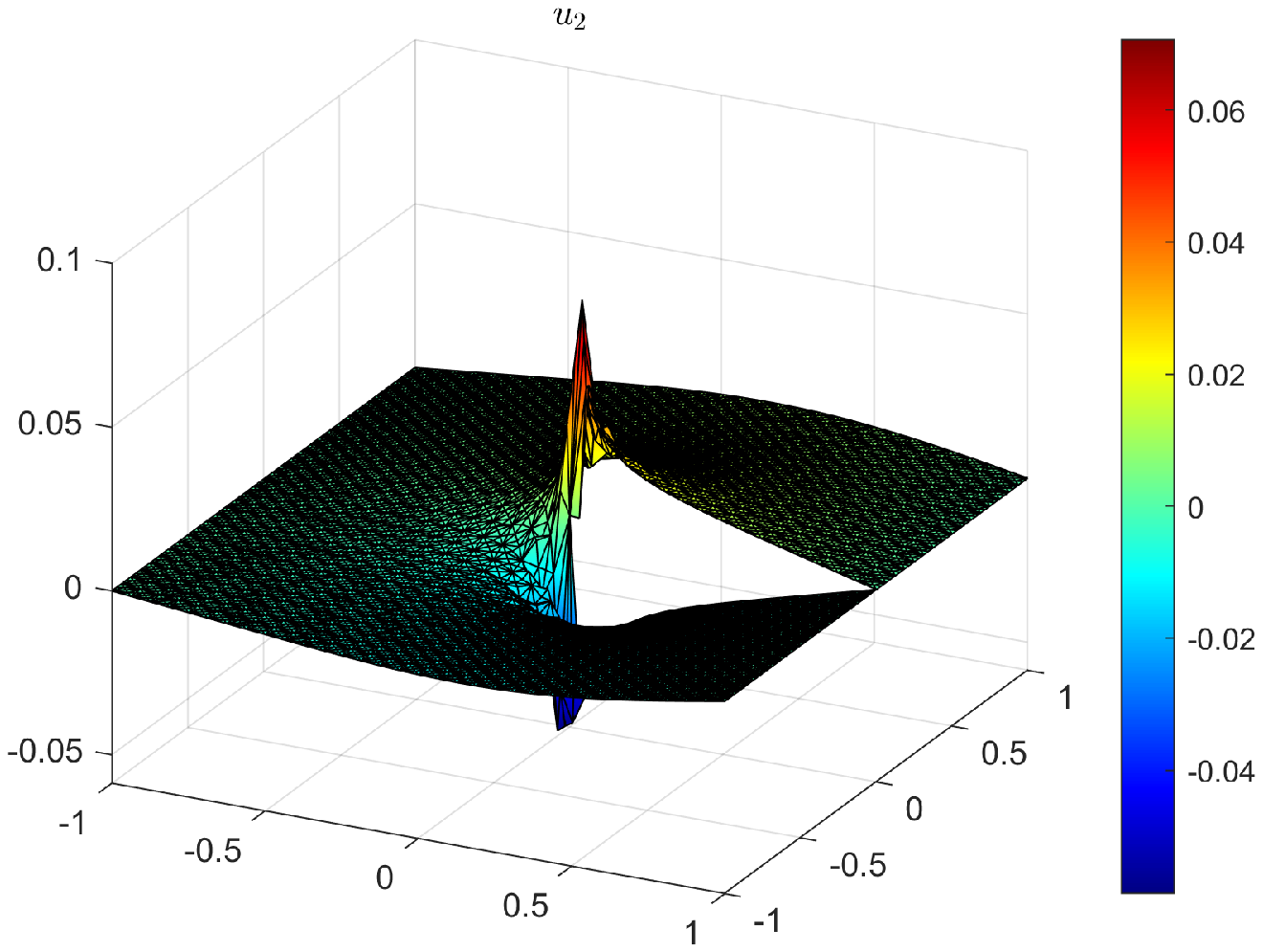}
\end{center}
\caption{The components of the fundamental eigenfunction on $\Omega_3$.}
\label{fig:o3_minvector_os_p1_PS}
\end{figure}

To compare the results obtained from different stabilizations for this domain, we list the first 10 eigenvalues obtained from the AG formulation when the components are left free and using the same set of stabilization parameters in Table \ref{tab:o3_ag_PS_ufree}. As before, the OSGS results remain more accurate for each eigenvalue, although a similar convergence tendency is observed in each one. 

\begin{table}[!h] 
\small
\caption{The first 10 eigenvalues on $\Omega_3$, AG stabilization, $u_1,u_2$ free at the tip.}  
\begin{center} 
\begin{tabular}{c|cccccc}
 \hline \hline  Ref.  &    &       & Computed     &     &   
\tabularnewline		 & $N=2$   & $N=4$ & $N=8$  & $N=16$ & $N=32$
\tabularnewline  \hline   
 1.0341 &  2.7081 &  2.5061 (0.2) &  2.4740 (0.0) &  2.4688 (0.0)&  1.9780 (0.6) \\
 2.4674 &  4.3613 &  4.0226 (0.3) &  3.6555 (0.4) &  2.7096 (2.3)&  2.4677 (9.6) \\
 4.0469 &  9.1407 &  5.1631 (2.2) &  4.1507 (3.4) &  4.0576 (3.3)&  4.0486 (2.7) \\
 9.8696 & 18.6120 & 10.4897 (3.8) &  9.9755 (2.5) &  9.8912 (2.3)&  9.8746 (2.1) \\
 9.8696 & 19.6901 & 10.4906 (4.0) &  9.9757 (2.5) &  9.8913 (2.3)&  9.8746 (2.1) \\
10.8449 & 20.0936 & 11.6341 (3.6) & 10.9921 (2.4) & 10.8741 (2.3)& 10.8513 (2.2) \\
12.2649 & 20.8839 & 13.2236 (3.2) & 12.5039 (2.0) & 12.3713 (1.2)& 12.3449 (0.4) \\
12.3370 & 29.5487 & 15.0561 (2.7) & 13.4124 (1.3) & 12.8012 (1.2)& 12.5249 (1.3) \\
19.7392 & 35.7446 & 21.7758 (3.0) & 20.1626 (2.3) & 19.8276 (2.3)& 19.7595 (2.1) \\
21.2441 & 41.9310 & 25.0012 (2.5) & 22.1829 (2.0) & 21.5450 (1.6)& 21.3602 (1.4) \\
\hline
\end{tabular}
\end{center}
\label{tab:o3_ag_PS_ufree}
\end{table} 

So far, we have reported the results from interpolations considered on PS type meshes. However, to show the capability of using CC type meshes with increased density along the crack and in the vicinity of the tip with the stabilized formulations, we have tested their approximation features on domains with such strong singularities. A sample triangulation is shown in Figure \ref{fig:cs_meshCCPS}. We list the results that are obtained from $P_1$ interpolations on these internal layer meshes taking $\ell=0.5$, $c_{\u}=2.0$, and $c_p=1.0$, using the AG formulation in Table \ref{tab:o3_AG_CC_ufree} and the OSGS formulation in Table \ref{tab:o3_osgs_CC_ufree}. The results in both of these tables put forward the overall acceptable convergence properties, noting as before the low regularity in the first eigenfunction.

\begin{table}[!h] 
\small
\caption{The first 10 eigenvalues on $\Omega_3$, AG stabilization using CC mesh, $u_1,u_2$ free at the tip.}  
\begin{center} 
\begin{tabular}{c|cccccc}
 \hline \hline  Ref.  &    &       & Computed     &     &   
\tabularnewline		 & $N=8$   & $N=16$ & $N=24$  & $N=32$ & $N=40$
\tabularnewline  \hline   
 1.0341 &  2.5302 &  2.4791 (0.1) &  2.4718 (0.0) &  2.4697 (0.0)&  2.2558 (0.7) \\
 2.4674 &  4.3128 &  3.9203 (0.3) &  3.0270 (2.4) &  2.5497 (6.7)&  2.4688 (18.3) \\
 4.0469 &  6.2468 &  4.1019 (5.3) &  4.0645 (2.8) &  4.0557 (2.4)&  4.0522 (2.3) \\
 9.8696 & 11.2056 & 10.1046 (2.5) &  9.9581 (2.4) &  9.9153 (2.3)&  9.8974 (2.2) \\
 9.8696 & 11.2080 & 10.1046 (2.5) &  9.9581 (2.4) &  9.9153 (2.3)&  9.8974 (2.2) \\
10.8449 & 12.2382 & 11.0886 (2.5) & 10.9381 (2.4) & 10.8936 (2.3)& 10.8748 (2.2) \\
12.2649 & 13.7696 & 12.5992 (2.2) & 12.4411 (1.6) & 12.3926 (1.1)& 12.3717 (0.8) \\
12.3370 & 16.4212 & 13.7661 (1.5) & 13.1375 (1.4) & 12.8756 (1.4)& 12.7339 (1.4) \\
19.7392 & 22.7601 & 20.3722 (2.3) & 20.0031 (2.2) & 19.8836 (2.1)& 19.8303 (2.1) \\
21.2441 & 26.3217 & 23.1452 (1.4) & 22.3451 (1.3) & 21.9772 (1.4)& 21.7905 (1.3) \\  
\hline
\end{tabular}
\end{center}
\label{tab:o3_AG_CC_ufree}
\end{table} 

\begin{table}[!h] 
\small
\caption{The first 10 eigenvalues on $\Omega_3$, OSGS stabilization using CC mesh, $u_1,u_2$ free at the tip.}  
\begin{center} 
\begin{tabular}{c|cccccc}
 \hline \hline  Ref.  &    &       & Computed     &     &   
\tabularnewline		 & $N=8$   & $N=16$ & $N=24$  & $N=32$ & $N=40$
\tabularnewline  \hline   
 1.0341 &  2.5316 &  2.4804 (0.1) &  2.4723 (0.0) &  2.2958 (0.5)&  2.0894 (0.8) \\
 2.4674 &  4.3066 &  3.3216 (1.1) &  2.6162 (4.3) &  2.4699 (14.2)&  2.4689 (2.3) \\
 4.0469 &  4.9140 &  4.0973 (4.1) &  4.0654 (2.5) &  4.0563 (2.4)&  4.0525 (2.3) \\
 9.8696 & 11.2261 & 10.1299 (2.4) &  9.9673 (2.4) &  9.9193 (2.3)&  9.8995 (2.3) \\
 9.8696 & 11.2416 & 10.1301 (2.4) &  9.9674 (2.4) &  9.9194 (2.3)&  9.8995 (2.3) \\
10.8449 & 12.1956 & 11.1009 (2.4) & 10.9444 (2.3) & 10.8966 (2.3)& 10.8763 (2.2) \\
12.2649 & 13.6625 & 12.5979 (2.1) & 12.4445 (1.5) & 12.3944 (1.1)& 12.3726 (0.8) \\
12.3370 & 15.0035 & 13.3779 (1.4) & 12.9190 (1.4) & 12.7404 (1.3)& 12.6421 (1.3) \\
19.7392 & 22.6499 & 20.3592 (2.2) & 20.0052 (2.1) & 19.8851 (2.1)& 19.8312 (2.1) \\
21.2441 & 26.0026 & 22.9060 (1.5) & 22.0858 (1.7) & 21.8061 (1.4)& 21.6679 (1.3) \\
\hline
\end{tabular}
\end{center}
\label{tab:o3_osgs_CC_ufree}
\end{table}

\section{Conclusions}
\label{sec:c}

We have studied and  numerically validated the characteristics of approximations to the solutions of the Maxwell eigenvalue problem that are obtained using nodal finite elements. Apart from the standard Galerkin formulation used with special (PS type) elements, two stabilized finite element formulations (AG and OSGS) have been implemented successfully to approximate both smooth and singular solutions. The convergence characteristics and error estimates rely on the associated analysis of the source problems. We have shown using the spectral theory that the formulations are optimally convergent for a set of algorithmic parameters that are implemented within the stabilized formulations. 

The Galerkin formulation, which is singular for the source problem, has been shown numerically to yield reasonable results using PS meshes, as expected. We have also shown that using CC type meshes may lead to spurious solutions even for smooth cases. The OSGS formulation has been shown to work proficiently on the meshes considered in this study, namely, PS and CC type meshes. The AG formulation has been shown to yield adequate results for smooth solutions, noting the sensitivity to strong singular solutions.  

As the main interest of the present study is in the use of nodal elements, a number of strategies of imposition of the boundary condition at the re-entrant corners have been explored. It has been set forth that while leaving the components free does not work well for the standard Galerkin formulation, it functions successfully  for both of the stabilized formulations. In addition, it has been shown that a fictitious normal may serve as the best alternative for some problem geometries such as the L-shape domain.  

Consequently, we have shown numerically that the stabilized methods can successfully approximate the eigensolutions of Maxwell's system when certain meshes are used, although with some limitations in accuracy in the case of strong singularities. The proposed methods compare very favorably with other formulations due to their ability to acquire the discrete spectrum without the obligation of eliminating the frequencies approximating zero, in addition to their facility of accommodating any order of interpolations and allowing a coupling of different operators.

\begin{acknowledgement}
R. Codina acknowledges the support received from the ICREA Acad\`emia Research Program of the Catalan Government.
\end{acknowledgement}

\newpage

\end{document}